\documentclass[preprint,review,10pt]{elsarticle}
\usepackage{dirtytalk}
\usepackage{ifpdf}
\usepackage{graphicx}
\usepackage{graphicx}
\usepackage{amssymb}
\usepackage[margin=.9in]{geometry}
\small\normalsize
\usepackage{amsmath}

\usepackage{amsmath,amsthm,amssymb,enumerate,amsfonts}
\newtheoremstyle{lpthm}{\baselineskip}{\baselineskip}{\slshape}{}{\bfseries}{.}{
}{}

\newtheorem{theorem}{Theorem}[section]
\newtheorem{lemma}[theorem]{Lemma}
\newtheorem{corollary}{Corollary}[section]

\newtheorem{definition}[theorem]{Definition}
\newtheorem{prop}[theorem]{Proposition}
\newtheorem{example}[theorem]{Example}

\journal{XXXX}
\begin{document}

\begin{frontmatter}

\title{Convergence of algorithms for fixed points of relatively nonexpansive mappings via
Ishikawa iteration}
\author[a]{V.~Pragadeeswarar}
\ead{v\_pragadeeswarar@cb.amrita.edu}
\cortext[auth1]{Corresponding author; Phone : +91 96593 11515.}
\author[a]{R.~Gopi}
\author[b]{Choonkil Park}
\author[c]{Dong Yun Shin}
\address[a]{Department of Mathematics, Amrita School of Engineering, Amrita Vishwa Vidyapeetham,
Coimbatore-641112, Tamil Nadu, India.}
\address[b]{ Research Institute for Natural Sciences, Hanyang University, Seoul 04763, Korea}

\address[c]{Department of Mathematics, University of Seoul, Seoul 02504, Korea.}
\date{\today.}

\begin{abstract}
By using the Ishikawa iterative algorithm, we approximate the fixed points and the best proximity points of a 
relatively nonexpansive mapping. Also, we use the von Neumann sequence to prove the convergence result in a Hilbert space setting.
A comparison table is prepared using a numerical example which shows that the Ishikawa iterative algorithm is faster 
than some known iterative algorithms such as Picard and Mann iteration.
\end{abstract}
\begin{keyword}
von Neumann sequences; relatively non expansive mappings; best proximity points; fixed points.
\vskip 0.2in
\textbf{Mathematics Subject Classification 2010:} 41A65; 90C30; 47H10
\end{keyword}

\end{frontmatter}

\section{Introduction and Preliminaries}

Let $K$ be a nonempty subset of a Banach space $X$. The map $F : K \rightarrow K$ is non expansive if $\parallel Fw - Fz \parallel \leq 
\parallel w - z \parallel$ for all $w,z \in K$. In 1967, Browder \cite{e1} constructed the iterative process to fixed points of non expansive 
self maps on closed and convex subsets of a Hilbert space. Recently, many researchers are interested to study about the convergence of fixed points 
for these kind of mappings via different types of iterative process.  In \cite{1}, the authors have derived the results on convergence of Mann iteration 
process $w_{n+1}=(1-\eta_n)w_n + \eta_n Fw_n, \, \eta_n \in (\epsilon, 1-\epsilon)$ to relatively non 
expansive map of the type $F: M\cup N \rightarrow M\cup N,$ which satisfies $(i) \,F(M)\subseteq M$ 
and $F(N) \subseteq N$ and $(ii) \, \left\| Fw-Fz\right\| \leq \left\| w-z \right\|, \, \forall w \in M, z \in N.$ 
To prove these results, the authors used the von Neumann sequences. One can note that, a relatively non expansive
mappings need not be continuous in general.

Inspired by the work of Anthony Eldred et al. \cite{1}, in this paper, we obtain the 
convergence results of Ishikawa iteration process for relatively 
non expansive mappings in the Hilbert space setting via von Neumann sequences.

We also propose a numerical example to show that the Ishikawa iterative process converges more effectively than the Picard iterative process
and Mann iterative process.

\par The following notations are used subsequently:
\begin{eqnarray*}
	& & P_M(w)  = \{ z \in M : \left\| w-z \right\| = d(w,M) \};\\
	& & d(M,N)  =  inf\{ \left\| w-z \right\| : w \in M, z\in N \};\\
	& & M_0  =  \{ w\in M: \left\| w-z'\right\| = d(M,N) \, \text{for some} \, z'\in N \};\\
	& & N_0  = \{ z\in N: \left\| w'-z\right\| = d(M,N) \, \text{for some} \, w' \in M \}.
\end{eqnarray*}    

\par If $M$ is convex, closed subset of a reflexive and strictly convex space, then $P_M(w)$ contains one element and if $M$ and $N$ are convex,
closed subsets of a reflexive space, with either $M$ or $N$ is bounded, then $M_0 \neq \emptyset.$

\par The following definitions and theorems are very useful to our results:

\begin{definition}
	Let $M$ and $N$ be nonempty subsets of a metric space $(X,d).$ An element $w \in M$ is said to be a best proximity point of the nonself-mapping $F: M \rightarrow N$ if it satisfies the condition that 
	\begin{eqnarray*}
		d(w,Fw) = d(M,N).
	\end{eqnarray*}
\end{definition}

\begin{definition}
	Let $M$ and $N$ be nonempty subsets of a Banach space $X$. A mapping $F : M\cup N \rightarrow M\cup N$ is relatively non expansive if
	\begin{eqnarray*}
		\left\| Fw-Fz\right\| \leq \left\| w-z \right\|, \, \text{for all} \,\, w\in M, z\in N.
	\end{eqnarray*} 
\end{definition}

\begin{theorem}\cite{2} \label{l1}
	Let $M$ and $N$ be nonempty closed bounded convex subsets of a uniformly convex Banach space. Let $F : M\cup N \rightarrow M\cup N$ satisfies
	\begin{enumerate}
		\item $F(M)\subseteq N$ and $F(N)\subseteq M;$ and
		\item $\left\| Fw-Fz\right\| \leq \left\| w-z \right\|$ for $w\in M, z\in N.$
	\end{enumerate}
	Then there exist $(w,z) \in M\times N$ such that $\left\| w-Fw\right\| = \left\| z-Fz \right\| = d(M,N).$
\end{theorem}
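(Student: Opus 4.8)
The plan is to combine the geometry of the uniformly convex space with a minimal–invariant–set argument in the spirit of the Browder--G\"ohde--Kirk fixed point theorem, adapted to the cyclic (proximal) setting. Throughout I write $d=d(M,N)$. First I would record the ambient structural facts: a uniformly convex space is reflexive and strictly convex, so the bounded closed convex sets $M,N$ are weakly compact and the metric projections $P_M,P_N$ are single valued. Since $M,N$ are bounded closed convex in a reflexive space, the proximal sets $M_0,N_0$ are nonempty, and they are closed and convex.

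The first substantive step is to show that $F$ respects the proximal structure. If $x\in M_0$ and $y=P_N x$, so that $\|x-y\|=d$, then $Fx\in N$ and $Fy\in M$, and relative nonexpansiveness gives $\|Fx-Fy\|\le\|x-y\|=d$; but $\|Fx-Fy\|\ge d$ because $Fy\in M$ and $Fx\in N$, whence $\|Fx-Fy\|=d$. Thus $Fx\in N_0$ and $Fy\in M_0$, and by uniqueness of the projection $Fy=P_M(Fx)$. This yields $F(M_0)\subseteq N_0$, $F(N_0)\subseteq M_0$, and shows that $F$ carries proximal pairs to proximal pairs. A companion geometric lemma I would prove directly from uniform convexity is the standard one: if $\{u_n\},\{v_n\}$ lie in a fixed convex set and $\{w_n\}$ satisfies $\|u_n-w_n\|\to d$, $\|v_n-w_n\|\to d$ and $\|\tfrac{u_n+v_n}{2}-w_n\|\to d$, then $\|u_n-v_n\|\to 0$. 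This is the engine that turns ``almost minimal distance'' into genuine convergence; in particular it gives that $P_N$ restricted to $M_0$ is an isometry onto $N_0$.

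Next I would run a Zorn's lemma argument. Consider the family $\Sigma$ of pairs $(C,D)$ with $C\subseteq M$ and $D\subseteq N$ nonempty closed convex, $d(C,D)=d$, $(C,D)$ a proximal pair, and $F(C)\subseteq D$, $F(D)\subseteq C$. By the previous step $(M_0,N_0)\in\Sigma$, so $\Sigma\neq\emptyset$. Order $\Sigma$ by componentwise reverse inclusion. A descending chain has a lower bound: the intersections are nonempty by weak compactness, and they retain the value $d(C,D)=d$ and the proximal-pair property by the uniform convexity lemma above. Zorn's lemma then produces a minimal pair $(K_1,K_2)$.

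The crux, and the step I expect to be the main obstacle, is to show that minimality forces $K_1$ and $K_2$ to be singletons; this is exactly where uniform convexity must be used as \emph{proximal normal structure}, and where continuity of $F$ is \emph{not} available. Assuming $\operatorname{diam}(K_1\cup K_2)>d$, I would produce, via a Chebyshev/asymptotic-center construction inside the uniformly convex space, a point of $K_1$ whose farthest distance to $K_2$ is strictly smaller than the diameter; the set of all such central points, together with its counterpart in $K_2$, forms a nonempty closed convex proximal subpair that is again $F$-invariant in the cyclic sense but strictly smaller, contradicting minimality. Hence $\operatorname{diam}(K_1\cup K_2)=d$, and then strict convexity forces $K_1=\{w\}$ and $K_2=\{z\}$. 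Finally, cyclic invariance $F(K_1)\subseteq K_2=\{z\}$ gives $Fw=z$, while the proximal-pair property gives $\|w-z\|=d$, so $\|w-Fw\|=d$; symmetrically $Fz=w$ and $\|z-Fz\|=d$, which is the desired conclusion.
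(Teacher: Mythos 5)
This theorem is quoted from reference \cite{2} (Eldred--Kirk--Veeramani, \emph{Studia Math.} 171 (2005)); the paper gives no proof of it, so there is no in-paper argument to compare against. Your outline essentially reconstructs the route taken in that source: pass to the proximal pair $(M_0,N_0)$, observe that the cyclic relatively nonexpansive map preserves it, extract a minimal invariant proximal pair by Zorn's lemma, and use the geometry of uniform convexity (``proximal normal structure'') to force the minimal pair to be a pair of singletons. The first step is carried out correctly and completely: the computation $d\le\|Fx-Fy\|\le\|x-y\|=d$ for a proximal pair $(x,y)$ is exactly right, and the endgame (strict convexity collapsing a pair of proximal diameter $d$ to singletons, then reading off $\|w-Fw\|=\|z-Fz\|=d$) is also sound.

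Two genuine gaps remain, both of which you partly flag but do not close. First, in the Zorn step, showing that the intersection of a descending chain of proximal pairs is nonempty \emph{and still at distance $d$ and still proximal} is not a consequence of the three-sequence uniform convexity lemma you cite; it needs weak compactness of the nested sets together with weak lower semicontinuity of the norm applied to a net of proximal pairs $(x_\alpha,y_\alpha)$, and one must in general pass to the proximal subpair of the intersection (checking $F$ still cycles it). Second, and more seriously, the ``crux'' paragraph asserts that the Chebyshev-type contracted sets $C_1=\{x\in K_1: K_2\subseteq B(x,r)\}$, $C_2=\{y\in K_2: K_1\subseteq B(y,r)\}$ are again $F$-invariant in the cyclic sense, but this does \emph{not} follow from relative nonexpansiveness alone: for $x\in C_1$ one only gets $F(K_2)\subseteq B(Fx,r)$, not $K_1\subseteq B(Fx,r)$. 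The missing ingredient is the minimality identity $K_1=\overline{\mathrm{conv}}(F(K_2))$ and $K_2=\overline{\mathrm{conv}}(F(K_1))$ (itself obtained by checking that these convex hulls form an admissible smaller pair), which converts $F(K_2)\subseteq B(Fx,r)$ into $K_1\subseteq B(Fx,r)$. You would also still owe the verification that bounded closed convex pairs in a uniformly convex space actually have proximal normal structure, i.e.\ that a point with $\sup_{y\in K_2}\|x-y\|<\operatorname{diam}$ and $d(x,K_2)=d$ exists; that is a quantitative estimate with the modulus of convexity, not an automatic consequence of the Chebyshev construction. With those insertions the outline matches the published proof.
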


\begin{theorem}\cite{2} \label{l2}
	Let $M$ and $N$ be nonempty closed bounded convex subsets of a uniformly convex Banach space. Let $F : M\cup N \rightarrow M\cup N$ satisfies
	\begin{enumerate}
		\item $F(M)\subseteq M$ and $F(N)\subseteq N;$ and
		\item $\left\| Fw-Fz\right\| \leq \left\| w-z \right\|$ for $w\in M, z\in N.$
	\end{enumerate}
	Then there exist $w_0 \in M$ and $z_0 \in N$ such that $Fw_0=w_0, Fz_0=z_0,$ and $\left\| w_0-z_0\right\| = d(M,N).$
\end{theorem}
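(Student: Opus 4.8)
The plan is to manufacture a minimal closed convex pair that is invariant under $F$ and carries the distance $d:=d(M,N)$, and then to exploit the geometry of uniformly convex spaces (proximal normal structure) to force this minimal pair to collapse to a single pair of points, which are exactly the fixed points sought. First I would record that $F$ keeps proximal points proximal: if $w\in M$, $z\in N$ satisfy $\|w-z\|=d$, then $Fw\in M$, $Fz\in N$ give $\|Fw-Fz\|\ge d$, while relative nonexpansiveness gives $\|Fw-Fz\|\le\|w-z\|=d$, so $\|Fw-Fz\|=d$; in particular $F(M_0)\subseteq M_0$ and $F(N_0)\subseteq N_0$. Next let $\Sigma$ be the family of all nonempty closed convex pairs $(A,B)$ with $A\subseteq M$, $B\subseteq N$, $d(A,B)=d$, $F(A)\subseteq A$ and $F(B)\subseteq B$, ordered by $(A_1,B_1)\preceq(A_2,B_2)\iff A_2\subseteq A_1,\ B_2\subseteq B_1$. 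It is nonempty since $(M,N)\in\Sigma$. As a uniformly convex space is reflexive, closed bounded convex sets are weakly compact; for a chain, invariance passes to the intersection elementwise ($w\in\bigcap A_i\Rightarrow Fw\in A_i$ for every $i$), and the nested weakly compact sets $\{(w,z)\in A_i\times B_i:\|w-z\|\le d\}$ have nonempty intersection, so $d(\bigcap A_i,\bigcap B_i)=d$. Thus every chain has an upper bound and Zorn's lemma yields a minimal pair $(M_1,N_1)\in\Sigma$. Since $F$ need not be continuous, it is essential here that invariance is verified elementwise and not through closures of images.

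Writing $\delta(w,B)=\sup_{z\in B}\|w-z\|$ and $\delta(A,B)=\sup_{w\in A}\delta(w,B)$, I would extract two consequences of minimality. The pair $(\overline{\mathrm{conv}}\,F(M_1),\overline{\mathrm{conv}}\,F(N_1))$ again lies in $\Sigma$: it is invariant, and it is proximal because an attained proximal pair $(a,b)\in M_1\times N_1$ yields $Fa,Fb$ at distance exactly $d$ by the computation above. By minimality, $\overline{\mathrm{conv}}\,F(M_1)=M_1$ and $\overline{\mathrm{conv}}\,F(N_1)=N_1$. Using this, for every $w\in M_1$ the function $z\mapsto\|Fw-z\|$ is convex, so its supremum over $N_1=\overline{\mathrm{conv}}\,F(N_1)$ is attained over $F(N_1)$, whence
\[
\delta(Fw,N_1)=\sup_{z\in N_1}\|Fw-z\|=\sup_{z\in F(N_1)}\|Fw-z\|=\sup_{z'\in N_1}\|Fw-Fz'\|\le\sup_{z'\in N_1}\|w-z'\|=\delta(w,N_1),
\]
and symmetrically $\delta(Fz,M_1)\le\delta(z,M_1)$ for $z\in N_1$. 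Consequently the sublevel sets $\{w\in M_1:\delta(w,N_1)\le r\}$ and $\{z\in N_1:\delta(z,M_1)\le r\}$ are closed, convex and $F$-invariant.

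Finally I would show $(M_1,N_1)$ is a singleton pair. Suppose $\delta(M_1,N_1)>d$. A uniformly convex Banach space possesses proximal normal structure, so there is $(x_1,x_2)\in M_1\times N_1$ with $\delta(x_1,N_1)<\delta(M_1,N_1)$ and $\delta(x_2,M_1)<\delta(M_1,N_1)$; choosing $r$ strictly between these values and $\delta(M_1,N_1)$, the sublevel sets above form a nonempty, closed, convex, $F$-invariant pair strictly smaller than $(M_1,N_1)$. The delicate point, and the main obstacle, is ensuring this smaller pair is still \emph{proximal}, i.e.\ that it realizes the distance $d$: for this I would use that on a minimal proximal pair in a strictly convex space each point of $M_1$ has a unique proximal partner in $N_1$ and the assignment $w\mapsto P_{N_1}(w)$ is a bijection $M_1\to N_1$, then transport one sublevel set to the other through this correspondence so that proximal partners are retained and the distance $d$ is preserved. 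This produces a member of $\Sigma$ properly contained in $(M_1,N_1)$, contradicting minimality. Hence $\delta(M_1,N_1)=d$, forcing $M_1=\{w_0\}$ and $N_1=\{z_0\}$ with $\|w_0-z_0\|=d$; the invariances $F(M_1)\subseteq M_1$ and $F(N_1)\subseteq N_1$ then give $Fw_0=w_0$ and $Fz_0=z_0$, completing the proof.
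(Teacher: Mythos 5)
The paper does not actually prove this statement; it is imported verbatim from Eldred--Kirk--Veeramani \cite{2}, so your attempt has to be measured against their argument. Your overall strategy matches theirs: Zorn's lemma on the family of closed convex invariant proximal pairs, the identities $\overline{\mathrm{conv}}\,F(M_1)=M_1$ and $\overline{\mathrm{conv}}\,F(N_1)=N_1$ on a minimal pair, the resulting $F$-invariance of the sublevel sets of $\delta(\cdot,N_1)$ and $\delta(\cdot,M_1)$, and proximal normal structure of uniformly convex spaces to shrink the pair. All of that is correct as far as it goes.

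The gap is exactly at the point you yourself flag as delicate: showing that the strictly smaller invariant pair still realizes the distance $d$. Your proposed fix --- transporting one sublevel set to the other through the proximal-partner bijection $w\mapsto P_{N_1}(w)$ --- is not carried out, and it is not clear it can be: the image of a convex set under a metric projection need not be convex or closed in a uniformly convex Banach space, you would still have to check that the transported set is $F$-invariant and actually sits inside the other sublevel set, and even the existence of the bijection (every point of $M_1$ having a partner in $N_1$) requires a separate minimality argument that you only gesture at. The proof in \cite{2} avoids all of this with a midpoint trick: take $(x_1,x_2)$ from proximal normal structure, then proximal partners $x_2'\in N_1$ of $x_1$ and $x_1'\in M_1$ of $x_2$, and set $u=\frac{1}{2}(x_1+x_1')$, $v=\frac{1}{2}(x_2+x_2')$. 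Then $\|u-v\|\le\frac{1}{2}\|x_1-x_2'\|+\frac{1}{2}\|x_1'-x_2\|=d$, so $(u,v)$ is itself a proximal pair, while for every $y\in N_1$ one has $\|u-y\|\le\frac{1}{2}\delta(x_1,N_1)+\frac{1}{2}\delta(M_1,N_1)<\delta(M_1,N_1)$, and similarly for $v$; hence for a suitable level $r$ both $u$ and $v$ lie in the corresponding sublevel sets and witness that the smaller pair has distance $d$. Substituting this single computation for your projection argument closes the proof; the remainder of your outline, including the final strict-convexity step forcing $M_1$ and $N_1$ to be singletons, is sound.
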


\begin{theorem}\cite{9}\label{ll1}
	Let $X$ be a uniformly convex Banach space, $F$ is a non expansive mapping of the closed convex bounded subset $K$ of $X$ into $K$. 
	Then $F$ has a fixed point in $K$.
\end{theorem}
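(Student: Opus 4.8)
The plan is to manufacture a genuine fixed point out of a sequence of approximate fixed points, using uniform convexity through the notion of an asymptotic center. First I would construct the approximate fixed points. Fix any $u \in K$ and for each $n$ put $t_n = 1 - 1/n$, defining $F_n : K \to K$ by $F_n w = t_n Fw + (1-t_n)u$. Convexity of $K$ gives $F_n(K)\subseteq K$, and since $F$ is non expansive, $F_n$ is a contraction with constant $t_n<1$. As $K$ is closed in the Banach space $X$ it is complete, so the Banach contraction principle yields a unique $w_n \in K$ with $F_n w_n = w_n$. A direct computation gives $\left\| w_n - F w_n \right\| = (1 - t_n)\left\| u - F w_n \right\| \le (1 - t_n)\,\mathrm{diam}(K)$, which tends to $0$ because $K$ is bounded; thus $\{w_n\}$ is an approximate fixed point sequence.

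Next I would define $r : K \to [0,\infty)$ by $r(w) = \limsup_{n\to\infty} \left\| w - w_n \right\|$; this $r$ is convex and $1$-Lipschitz, and I set $r^* = \inf_{w \in K} r(w)$. The key step, and the one place where uniform convexity is essential, is to show that $r$ attains $r^*$ at a unique point. I would take a minimizing sequence $\{y_j\}$ with $r(y_j) \to r^*$ and apply the modulus of convexity $\delta$ to the vectors $y_i - w_n$ and $y_j - w_n$, whose difference is the fixed vector $y_i - y_j$ and whose average is $\frac{y_i + y_j}{2} - w_n$. Since $\frac{y_i + y_j}{2} \in K$ forces $r\!\left(\frac{y_i + y_j}{2}\right) \ge r^*$, the uniform-convexity estimate $\left\| \tfrac{(y_i - w_n)+(y_j - w_n)}{2}\right\| \le R\,(1 - \delta(\left\| y_i - y_j\right\|/R))$, with $R$ slightly above $\max(r(y_i),r(y_j))$, collapses in the limit unless $\left\| y_i - y_j\right\| \to 0$. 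Hence $\{y_j\}$ is Cauchy and converges to some $z \in K$ with $r(z) = r^*$, and the same estimate applied to two putative minimizers shows that $z$ is the only one. (Equivalently, one may invoke that a uniformly convex space is reflexive, so $K$ is weakly compact and the weakly lower semicontinuous $r$ attains its minimum, uniqueness again coming from the modulus of convexity.)

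Finally I would verify that this asymptotic center $z$ is fixed. From non expansiveness, $\left\| Fz - w_n\right\| \le \left\| Fz - F w_n\right\| + \left\| F w_n - w_n\right\| \le \left\| z - w_n\right\| + \left\| F w_n - w_n\right\|$, and since $\left\| F w_n - w_n\right\| \to 0$, taking $\limsup$ in $n$ gives $r(Fz) \le r(z) = r^*$. As $Fz \in K$, this forces $Fz$ to be a minimizer of $r$ as well, so by uniqueness $Fz = z$. The main obstacle is precisely the existence-and-uniqueness of the minimizer in the middle step: without uniform convexity the functional $r$ could have a flat set of minimizers and the argument would fail, so the quantitative use of the modulus of convexity is the crux of the proof.
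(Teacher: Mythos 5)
Your argument is correct, but note that the paper itself offers no proof of this statement: it is quoted verbatim from Browder's 1965 note (reference [9] in the bibliography) and used as a black box in the proof of Theorem~\ref{l3}, so there is no internal proof to compare against. What you have written is a complete, self-contained proof of the Browder--G\"ohde--Kirk theorem by the asymptotic-center method: the approximate fixed point sequence $w_n$ obtained from the contractions $F_nw=t_nFw+(1-t_n)u$, the functional $r(w)=\limsup_n\|w-w_n\|$, existence and uniqueness of its minimizer via the modulus of convexity, and the observation that $r(Fz)\le r(z)$ forces $Fz=z$. All the steps check out, including the delicate one (the minimizing sequence for $r$ is Cauchy because otherwise the midpoint would beat the infimum); the only cosmetic gap is the degenerate case $r^*=0$, where the uniform-convexity estimate is unnecessary since $\|y_i-y_j\|\le r(y_i)+r(y_j)\to 0$ directly. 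This route differs from Browder's original argument, which works through weak compactness of $K$ (via reflexivity) and a demiclosedness-type property of $I-F$ rather than through asymptotic centers; your approach is the more quantitative and arguably more elementary one, and it fits well with the rest of the paper, which repeatedly uses the modulus of convexity (Proposition~\ref{l5}) in exactly this spirit.
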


\begin{prop}\cite{5} \label{l5}
	If $X$ is a uniformly convex space and $\eta \in (0,1)$ and $\epsilon > 0,$ 
	then for any $d>0,$ if $w,z \in X$ are such that $\left\| w\right\| \leq d, \left\| z\right\| \leq d, \left\| w-z\right\| \geq \epsilon,$ 
	then there exists $\delta = \delta (\frac{\epsilon}{d}) > 0$ such that $\left\| \eta w+(1-\eta)z \right\| \leq \Big(1-2\delta(\frac{\epsilon}{d})
	min(\eta, 1-\eta) \Big)d.$
\end{prop}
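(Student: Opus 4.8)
The plan is to derive the estimate directly from the definition of uniform convexity, phrased through its modulus of convexity, and then to upgrade the midpoint inequality that this definition supplies into an inequality for an arbitrary convex combination. Recall that $X$ being uniformly convex means that for each $t\in(0,2]$ there is a number $\delta(t)>0$ such that $\left\| u\right\|\le 1$, $\left\| v\right\|\le 1$ and $\left\| u-v\right\|\ge t$ together force $\left\|\frac{u+v}{2}\right\|\le 1-\delta(t)$. The quantity $\delta(\frac{\epsilon}{d})$ in the statement is precisely this modulus evaluated at $t=\frac{\epsilon}{d}$; note that whenever the hypotheses are satisfiable we have $\frac{\epsilon}{d}\le \left\| w-z\right\|/d\le 2$, so the modulus is indeed defined at this point.

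First I would normalize. Setting $u=w/d$ and $v=z/d$, the hypotheses $\left\| w\right\|\le d$, $\left\| z\right\|\le d$, $\left\| w-z\right\|\ge \epsilon$ translate into $\left\| u\right\|\le 1$, $\left\| v\right\|\le 1$, $\left\| u-v\right\|\ge \frac{\epsilon}{d}$. Uniform convexity then gives at once
\[
\left\|\tfrac{u+v}{2}\right\|\le 1-\delta\!\left(\tfrac{\epsilon}{d}\right),
\]
which, after rescaling by $d$, is exactly the asserted bound in the special case $\eta=\tfrac12$.

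The key step is the passage from the midpoint to a general weight $\eta$. By the symmetry $\eta w+(1-\eta)z=(1-\eta')w+\eta' z$ with $\eta'=1-\eta$ and the fact that $\left\| w-z\right\|=\left\| z-w\right\|$, I may assume without loss of generality that $\eta\le\tfrac12$, so that $\min(\eta,1-\eta)=\eta$. The essential observation is that $\eta u+(1-\eta)v$ can be rewritten as a convex combination of the midpoint and the endpoint $v$, namely
\[
\eta u+(1-\eta)v=2\eta\cdot\tfrac{u+v}{2}+(1-2\eta)v,
\]
which is a legitimate combination because $2\eta\in[0,1]$. Applying the triangle inequality together with the midpoint estimate and $\left\| v\right\|\le 1$ yields
\[
\left\|\eta u+(1-\eta)v\right\|\le 2\eta\bigl(1-\delta(\tfrac{\epsilon}{d})\bigr)+(1-2\eta)=1-2\eta\,\delta(\tfrac{\epsilon}{d}).
\]

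Finally, multiplying through by $d$ and using $\min(\eta,1-\eta)=\eta$ recovers
\[
\left\|\eta w+(1-\eta)z\right\|\le\Big(1-2\delta(\tfrac{\epsilon}{d})\min(\eta,1-\eta)\Big)d,
\]
as required. There is no genuinely difficult step in this argument; the only point demanding care is the decomposition of $\eta u+(1-\eta)v$ into the midpoint plus an endpoint, together with the symmetric reduction to $\eta\le\tfrac12$, since this is precisely what converts the definition of uniform convexity—stated only for midpoints—into the stated estimate valid for every weight $\eta\in(0,1)$.
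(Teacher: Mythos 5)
Your argument is correct. The paper quotes this proposition from the literature (Chidume's book) without supplying a proof, and your derivation---normalizing to the unit ball, invoking the defining midpoint inequality of uniform convexity at $t=\tfrac{\epsilon}{d}$, reducing by symmetry to $\eta\le\tfrac12$, and then writing $\eta u+(1-\eta)v=2\eta\cdot\tfrac{u+v}{2}+(1-2\eta)v$ before applying the triangle inequality---is precisely the standard textbook proof of this lemma, with the one genuinely delicate point (that the modulus is only defined for $t\le 2$, which holds whenever the hypotheses are satisfiable) correctly addressed.
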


\begin{lemma}\cite{10}\label{lem1}
	Suppose $X$ is a uniformly convex Banach space. Suppose $0 < a < b< 1,$ and $\{t_n\}$ is a sequence in $[a,b].$  Suppose 
	$\{ w_n \} $ and $\{ z_n\}$ are sequences in $X$ such that $\parallel w_n \parallel \leq 1, \parallel z_n \parallel \leq 1$ for all $n$. 
	Define $\{ a_n\}$ in $X$ by $ a_n = (1-t_n)w_n + t_n z_n.$ If $\lim_{n\to\infty} \parallel a_n \parallel = 1,$ then $\lim_{n\to\infty} \parallel w_n - z_n \parallel = 0.$
\end{lemma}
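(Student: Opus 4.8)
The plan is to argue by contradiction and to exploit the quantitative uniform-convexity estimate already recorded in Proposition~\ref{l5}. Suppose, for contradiction, that the conclusion $\|w_n - z_n\| \to 0$ fails. Then there exist an $\epsilon > 0$ and a subsequence $\{n_k\}$ with $\|w_{n_k} - z_{n_k}\| \ge \epsilon$ for every $k$. The aim is to show that along this subsequence $\|a_{n_k}\|$ is bounded away from $1$, which will collide with the hypothesis $\lim_{n\to\infty}\|a_n\| = 1$.

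First I would set $d = 1$, which is legitimate because $\|w_{n_k}\| \le 1$ and $\|z_{n_k}\| \le 1$ by hypothesis, and then identify the convex combination defining $a_{n_k}$ with the one appearing in Proposition~\ref{l5} by taking $\eta = 1 - t_{n_k}$, so that $1 - \eta = t_{n_k}$ and $a_{n_k} = \eta\, w_{n_k} + (1-\eta)\, z_{n_k}$. Since $\|w_{n_k} - z_{n_k}\| \ge \epsilon$, Proposition~\ref{l5} supplies a $\delta = \delta(\epsilon) > 0$, independent of $k$, with
\[
\|a_{n_k}\| \le \Big(1 - 2\,\delta(\epsilon)\,\min(t_{n_k},\, 1 - t_{n_k})\Big).
\]

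The decisive step is to bound $\min(t_{n_k}, 1 - t_{n_k})$ from below uniformly in $k$, and this is precisely where the assumption $0 < a < b < 1$ together with $\{t_n\} \subset [a,b]$ enters. Since $t_{n_k} \ge a$ and $1 - t_{n_k} \ge 1 - b$, we obtain $\min(t_{n_k}, 1 - t_{n_k}) \ge m := \min(a, 1 - b) > 0$. Substituting this bound gives $\|a_{n_k}\| \le 1 - 2\,\delta(\epsilon)\,m$ for all $k$, the right-hand side being a fixed constant strictly less than $1$. This contradicts $\|a_{n_k}\| \to 1$, and the contradiction forces $\|w_n - z_n\| \to 0$.

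I do not anticipate a serious obstacle, since Proposition~\ref{l5} performs the analytic heavy lifting; the only point requiring genuine care is the uniform lower bound on the mixing coefficients in the last paragraph. Were $\{t_n\}$ permitted to approach $0$ or $1$, the factor $\min(t_{n_k}, 1 - t_{n_k})$ could degenerate to $0$ and the estimate would become vacuous, so the separation of $[a,b]$ from both endpoints is essential rather than cosmetic. I would also remark that $d = 1$ is used only as an upper bound for the norms, not as an equality, so no normalization of $w_n$ or $z_n$ is required.
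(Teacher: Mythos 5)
Your proof is correct. The paper itself offers no proof of Lemma~\ref{lem1} --- it is quoted from \cite{10} as a known result --- but your argument by contradiction via Proposition~\ref{l5}, with $d=1$, $\eta = 1-t_{n_k}$, and the uniform lower bound $\min(t_{n_k},1-t_{n_k})\ge\min(a,1-b)>0$, is the standard derivation and is exactly the way the paper deploys Proposition~\ref{l5} inside the proofs of Theorems~\ref{l3}, \ref{l12} and \ref{l13}. Your closing remark that the separation of $[a,b]$ from $0$ and $1$ is what keeps the estimate nondegenerate identifies the one genuinely essential hypothesis correctly.
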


We prove the following result which shows that, If $F$ is a nonexpansive mapping then the Ishikawa iteration converges to a fixed point of $F$. 
Moreover, it is useful to prove our main results.

\begin{theorem} \label{l3}
	Let $K$ be a nonempty bounded closed convex subset of a uniformly convex Banach space $X$ 
	and suppose $F : K\rightarrow K$ is a non expansive mapping.
	Let $w_0 \in K$ and define $w_{n+1}=(1-\eta_n)w_n+\eta_n F((1-\delta_n)w_n+\delta_n Fw_n), \,\text{where} \,\, \eta_n, \delta_n \in 
	(\epsilon, 1-\epsilon), n =0,1,2,...$ and $\epsilon\in(0,\frac{1}{2})$. Then $\lim_{n \rightarrow \infty} \left\| w_n-Fw_n\right\| = 0.$ 
	Moreover, if $F(K)$ lies in a compact set, $\{w_n \}$ converges to a fixed point of $F$. 
\end{theorem}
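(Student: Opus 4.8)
The plan is to combine the Fej\'er-type monotonicity of the Ishikawa iterates with the quantitative uniform convexity supplied by Lemma~\ref{lem1}, and then to promote subsequential convergence to convergence of the whole sequence. First I would observe that Theorem~\ref{ll1} guarantees a fixed point $p \in K$ with $Fp = p$. Writing $y_n = (1-\delta_n)w_n + \delta_n Fw_n$ so that $w_{n+1} = (1-\eta_n)w_n + \eta_n Fy_n$, the nonexpansiveness of $F$ together with the triangle inequality gives $\|y_n - p\| \le \|w_n - p\|$ and hence $\|w_{n+1} - p\| \le \|w_n - p\|$. Thus $\{\|w_n - p\|\}$ is nonincreasing and bounded below, so it converges to some $c \ge 0$; the same estimate holds verbatim for every fixed point of $F$, a fact I will reuse at the end.

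Next I would pin down the limiting behaviour of the inner iterate. From $\|w_{n+1}-p\| \le (1-\eta_n)\|w_n-p\| + \eta_n\|Fy_n - p\|$ and $\|Fy_n-p\|\le\|y_n-p\|\le\|w_n-p\|$, rearranging and using $\eta_n > \epsilon$ yields $\|w_n-p\| - \|y_n-p\| \le \epsilon^{-1}\bigl(\|w_n-p\| - \|w_{n+1}-p\|\bigr)\to 0$, so $\|y_n - p\| \to c$. The case $c=0$ is immediate since $\|w_n - Fw_n\|\le 2\|w_n-p\|$. When $c>0$ I would normalize by setting $u_n = (w_n-p)/\|w_n-p\|$ and $v_n = (Fw_n-p)/\|w_n-p\|$, so that $\|u_n\|=1$, $\|v_n\|\le 1$, and $(1-\delta_n)u_n + \delta_n v_n = (y_n-p)/\|w_n-p\|$ has norm tending to $c/c = 1$. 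Since $\epsilon < 1/2$ forces $\delta_n \in [\epsilon, 1-\epsilon] \subset (0,1)$ with endpoints bounded away from $0$ and $1$, Lemma~\ref{lem1} applies and gives $\|u_n - v_n\|\to 0$, i.e. $\|w_n - Fw_n\| = \|w_n-p\|\,\|u_n-v_n\|\to 0$. I expect this normalization — arranging the two vectors to have norm at most $1$ while keeping the convex combination's norm tending to $1$ — to be the main obstacle, since it is exactly what licenses the uniform-convexity input; alternatively Proposition~\ref{l5} could be invoked directly with $d = \|w_n-p\|$ to reach a contradiction were $\|w_n - Fw_n\|\not\to 0$.

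Finally, for the convergence statement I would use compactness of $\overline{F(K)}$: some subsequence $Fw_{n_k}$ converges to a point $q$, and $\|w_n - Fw_n\|\to 0$ then forces $w_{n_k}\to q$; closedness of $K$ gives $q\in K$, and continuity of the nonexpansive map $F$ gives $Fq = \lim Fw_{n_k} = q$, so $q$ is a fixed point of $F$. Applying the monotonicity of the first step to the fixed point $p = q$, the sequence $\{\|w_n - q\|\}$ converges, and since it admits a subsequence tending to $0$ its limit is $0$; therefore the whole sequence $\{w_n\}$ converges to the fixed point $q$.
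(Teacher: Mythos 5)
Your proposal is correct, and it reaches the conclusion by a genuinely different deployment of the uniform-convexity input than the paper does. The skeleton is shared: both start from a fixed point $p$ supplied by Theorem~\ref{ll1}, establish the Fej\'er-type inequality $\|w_{n+1}-p\|\le\|y_n-p\|\text{-type}\le\|w_n-p\|$, and finish with compactness. Where you diverge is the central step $\|w_n-Fw_n\|\to 0$ in the case $c>0$: you first extract the extra information $\|y_n-p\|\to c$ from the rearrangement $\eta_n\bigl(\|w_n-p\|-\|y_n-p\|\bigr)\le\|w_n-p\|-\|w_{n+1}-p\|$, and then apply Lemma~\ref{lem1} to the \emph{inner} convex combination $(1-\delta_n)u_n+\delta_n v_n$; the paper instead argues by contradiction on a subsequence with $\|w_{n_k}-Fw_{n_k}\|\ge\epsilon$ and invokes Proposition~\ref{l5} directly, reserving Lemma~\ref{lem1} for the \emph{outer} combination to prove the auxiliary fact $\|w_{n+1}-w_n\|\to 0$. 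Your route is arguably cleaner, since it avoids the $\xi$-bookkeeping of the modulus of convexity. Your endgame is also tighter: the paper deduces full convergence of $\{w_n\}$ from subsequential convergence plus $\|w_{n+1}-w_n\|\to 0$, which by itself is not a valid implication; your argument — apply the Fej\'er monotonicity to the newly found fixed point $q$, so that $\{\|w_n-q\|\}$ converges and has a null subsequence — supplies exactly the missing justification. The only point worth stating explicitly when writing this up is that in the case $c>0$ the normalization is legitimate because $\|w_n-p\|\ge c>0$ for all $n$ (the distances are nonincreasing), and that $\|w_n-Fw_n\|=\|w_n-p\|\,\|u_n-v_n\|\to 0$ uses boundedness of $K$ to control the first factor.
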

\begin{proof}
	By Theorem \ref{ll1}, there exist $z \in X$ such that  $Fz = z.$ Now,
	\begin{eqnarray*}
		\parallel w_{n+1}- z\parallel & = & \parallel(1-\eta_n)w_n + \eta_n F\big((1-\delta_n)w_n+\delta_n Fw_n\big)-z\parallel \\
		& = & \parallel (1-\eta_n)w_n + \eta_n F\big((1-\delta_n)w_n+\delta_n Fw_n\big)-\big((1-\eta_n)z + \eta_n z \big) \parallel \\
		& \leq & (1-\eta_n) \parallel w_n-z \parallel + \eta_n \parallel F\big((1-\delta_n)w_n + \delta_n Fw_n\big)-Fz \parallel \\
		& \leq & (1-\eta_n) \parallel w_n-z \parallel + \eta_n \parallel (1-\delta_n)w_n + \delta_n Fw_n-z \parallel \\
		& = & (1-\eta_n) \parallel w_n-z \parallel \\&& + \eta_n \parallel (1-\delta_n)w_n + \delta_n Fw_n-\big( (1-\delta_n)z + \delta_n z \big) \parallel \\
		& \leq & (1-\eta_n) \parallel w_n-z \parallel + \eta_n \big( \parallel (1-\delta_n)(w_n-z) \parallel + \delta_n \parallel Fw_n- Fz \parallel \big) \\
		& \leq & \parallel w_n-z \parallel.
	\end{eqnarray*}
	This implies that the sequence $\{ \parallel w_n-z \parallel \} $ is non increasing and bounded below by $0.$ Hence, we have 
	$\parallel w_n - z \parallel \rightarrow d\geq0.$ \\	 
	\textbf{Case (i) :} If $\parallel w_n - z \parallel \rightarrow 0.$
	\begin{eqnarray*}
		\parallel w_n - Fw_n \parallel &\leq& \parallel w_n - z \parallel + \parallel z - F w_n \parallel\\
		&=& \parallel w_n - z \parallel + \parallel Fz - F w_n \parallel\\
		&\leq& \parallel w_n - z \parallel + \parallel z - w_n \parallel.
	\end{eqnarray*}
	As $n \rightarrow \infty$, we get $\parallel w_n - Fw_n \parallel \rightarrow 0.$ Let $z_n=(1-\delta_n) w_n + \delta_n Fw_n.$
	\begin{eqnarray*}
		\parallel z_n - z \parallel &=& \parallel (1-\delta_n) w_n + \delta_n Fw_n - z \parallel \\
		&= & \parallel (1-\delta_n) w_n + \delta_n Fw_n - ((1-\delta_n)z + \delta_n z) \parallel \\
		&\leq& (1-\delta_n) \parallel w_n - z \parallel + \delta_n \parallel Fw_n - Fz \parallel \\
		&\leq & (1-\delta_n) \parallel w_n - z \parallel + \delta_n \parallel w_n - z \parallel \\
		&=& \parallel w_n - z \parallel.
	\end{eqnarray*}
	And we know that 
	\begin{eqnarray*}
		\parallel w_{n+1} - w_n \parallel &=& \eta_n \parallel Fz_n - w_n \parallel\\
		&\leq& \eta_n (\parallel Fz_n - z \parallel + \parallel z - w_n \parallel)\\
		&\leq & \eta_n (\parallel z_n - z \parallel + \parallel z - w_n \parallel)
	\end{eqnarray*}
	\begin{eqnarray*}
		&\leq& \eta_n (\parallel w_n - z \parallel + \parallel z - w_n \parallel).
	\end{eqnarray*}
	As $n \rightarrow \infty,$ we get $\parallel w_{n+1} - w_n \parallel \rightarrow 0.$\\
	\textbf{Case (ii) :} If $\parallel w_n - z \parallel \rightarrow d > 0.$ We need to show that $\parallel w_{n} - Fw_{n} \parallel\to0.$ Suppose not.
	Then there exists a subsequence $\{ w_{n_k} \}$ of $\{ w_n \}$ and an $\epsilon>0$ such that $\parallel w_{n_k} - Fw_{n_k} \parallel \geq \epsilon > 0$ for all $k$.\\
	Since the modulus of convexity of $\delta$ of $X$ is continuous and  increasing function we choose $\xi > 0$ 
	as small that $\Big( 1- c\delta \big(\frac{\epsilon}{d+\xi} \big) \Big)(d+\xi)< d,$ where $c > 0.$\\
	Now we choose $k$, such that $\parallel w_{n_k} - z \parallel \leq d+ \xi.$ By using Proposition \ref{l5},
	\begin{eqnarray*}
		\parallel z-w_{n_k + 1} \parallel & = & \parallel z - \big( (1-\eta_{n_k})w_{n_k} + \eta_{n_k} F\big((1-\delta_{n_k})w_{n_k} +\delta_{n_k} Fw_{n_k}\big) \big) \parallel \\
		& = & \parallel (1-\eta_{n_k})z + \eta_{n_k}z \\&& - \big( (1-\eta_{n_k})w_{n_k} + \eta_{n_k} F\big((1-\delta_{n_k})w_{n_k} +\delta_{n_k} Fw_{n_k}\big) \big) \parallel \\
		& \leq & (1-\eta_{n_k}) \parallel z - w_{n_k} \parallel + \eta_{n_k} \parallel Fz - F\big((1-\delta_{n_k})w_{n_k} +\delta_{n_k} Fw_{n_k}\big) \parallel \\
		& \leq & (1-\eta_{n_k}) (d+ \xi) + \eta_{n_k} \parallel z - \big((1-\delta_{n_k})w_{n_k} +\delta_{n_k} Fw_{n_k}\big) \parallel \\
		& = & (1-\eta_{n_k}) (d+ \xi) + \eta_{n_k} \parallel (1-\delta_{n_k})(z - w_{n_k}) +\delta_{n_k} (z-Fw_{n_k}) \parallel \\
		& \leq & (1-\eta_{n_k}) (d+ \xi) + \eta_{n_k} \Big( 1-2 \delta \Big(\frac{\epsilon}{d+\xi} \Big) \text{min} \{ \delta_{n_k}, 1-\delta_{n_k} \} \Big) (d+\xi)\\
		& = & \Big(1-\eta_{n_k} + \eta_{n_k} - 2 \eta_{n_k} \delta \Big(\frac{\epsilon}{d+\xi} \Big) \text{min} \{ \delta_{n_k}, 1-\delta_{n_k} \} \Big) (d+\xi)\\
		& = & \Big(1 - 2 \delta \Big(\frac{\epsilon}{d+\xi} \Big) \text{min} \{ \eta_{n_k} \delta_{n_k}, \eta_{n_k}(1-\delta_{n_k}) \} \Big) (d+\xi).
	\end{eqnarray*}
	Since there exists $l > 0$ such that $2 \,\text{min} \{ \eta_{n_k} \delta_{n_k}, \eta_{n_k}(1-\delta_{n_k}) \} \geq l,$
	\begin{eqnarray*}
		\Big(1 - 2 \delta \Big(\frac{\epsilon}{d+\xi} \Big) \text{min} \{ \eta_{n_k} \delta_{n_k}, \eta_{n_k}(1-\delta_{n_k}) \} \Big) (d+\xi) \leq \Big( 1- l\delta \Big(\frac{\epsilon}{d+\xi} \Big)\Big)(d +\xi).
	\end{eqnarray*}
	Suppose we choose very small $\xi > 0$, we have $\Big( 1- l\delta \Big(\frac{\epsilon}{d+\xi} \Big)\Big)(d +\xi) < d$, which is contradiction.
	This implies that $\lim_{n\to\infty}\parallel w_n-Fw_n \parallel = 0.$\\ Now we prove that $\| w_{n+1} - w_n \parallel \rightarrow 0.$ 
	We have $\parallel w_{n+1} - w_n \parallel = \eta_n \parallel Fz_n - w_n \parallel$,
	where $z_n = (1-\delta_n)w_n + \delta_n Fw_n.$ 
	Now, we define $a_n = \frac{w_{n+1} - z}{\parallel w_n - z \parallel}$, \, $v_n = \frac{Fz_n - z}{\parallel w_n - z \parallel}$ and
	$w_n = \frac{w_n - z}{\parallel w_n - z
		\parallel}.$ One can note that $\parallel w_n \parallel = 1.$ Now,
	\begin{eqnarray*}
		\parallel Fz_n - z \parallel &=& \parallel Fz_n - Fz \parallel \\
		&\leq& \parallel z_n - z \parallel \\
		&\leq& \parallel (1-\delta_n) w_n + \delta_n Fw_n - z \parallel \\
		&\leq & \parallel (1-\delta_n) w_n + \delta_n Fw_n - ((1-\delta_n)z + \delta_n z) \parallel \\
		&\leq& (1-\delta_n) \parallel w_n - z \parallel + \delta_n \parallel Fw_n - Fz \parallel \\
		&\leq & (1-\delta_n) \parallel w_n - z \parallel + \delta_n \parallel w_n - z \parallel \\
		&=& \parallel w_n - z \parallel. 
	\end{eqnarray*}
	Therefore $\parallel v_n \parallel = \frac{\parallel Fz_n - z \parallel}{\parallel w_n - z \parallel} \leq \frac{\parallel w_n - z \parallel}{\parallel w_n - z \parallel} = 1.$ 
	From the Ishikawa iteration, we obtain $w_{n+1} - z = (1-\eta_n)(w_n - z) + \eta_n (Fz_n - z)$. 
	Dividing by $\parallel w_n - z \parallel,$ we get $$\frac{w_{n+1}- z}{\parallel w_n - z \parallel} = (1-\eta_n) \frac{(w_n - z)}{\parallel w_n - z \parallel} + \eta_n \frac{(Fz_n - z)}{\parallel w_n - z \parallel}.$$ 
	Then $a_n = (1-\eta_n)w_n + \eta_n v_n.$ 
	Now we prove that $\parallel a_n \parallel \rightarrow 1.$ Now,
	\begin{eqnarray*}
		\lim_{n \rightarrow \infty} \parallel a_n \parallel
		&=& \lim_{n\rightarrow \infty} \frac{\parallel w_{n+1} - z \parallel}{\parallel w_n - z \parallel} = \frac{d}{d} = 1. 
	\end{eqnarray*}
	By Lemma \ref{lem1}, $\parallel w_n - v_n \parallel \rightarrow 0.$ This implies that $\parallel w_n - Fz_n \parallel \rightarrow 0.$ Therefore $\parallel w_{n+1} - w_n \parallel \rightarrow 0.$\\
	Since $F(K)$ is contained in a compact set, $\{Fw_n \}$ has a subsequence $\{ Fw_{n_k} \}$ that converges to a point $a \in M.$ 
	Also $\{ w_{n_k} \}$ and $\{ w_{n_k+1} \}$ converge to $a$. 
	This implies that $\{w_n\} $ converge to $a.$ Then $Fw_n \rightarrow a.$ 
	In particular, $ Fw_{n_k}  \rightarrow a$ and $w_{n_k} \rightarrow a.$ Since $F$ is continuous, 
	implies that $ Fw_{n_k}  \rightarrow Fa.$ Therefore $Fa = a.$
\end{proof}

\begin{theorem}\cite{1} \label{l4}
	Let $M$ and $N$ be nonempty bounded closed convex subset of a uniformly convex Banach space and suppose $F: M\cup N \rightarrow M\cup N$ satisfies
	\begin{enumerate}
		\item $F(M) \subseteq M$ and $F(N) \subseteq N ;$ and
		\item $\parallel Fw-Fz \parallel \leq \parallel w-z \parallel $ for $w\in M, z\in N.$
	\end{enumerate}
	Let $w_0\in M,$ and 
	define $w_{n+1}=P^n \big( (1-\eta_n)w_n + \eta_n Fw_n \big),\, \eta_n \in (\epsilon, 1-\epsilon),$ where $\epsilon\in (0,1/2)$ and $n=0,1,2,...$. Then $lim_{n \rightarrow \infty}\parallel w_n-Fw_n \parallel = 0.$ Moreover, if $F(M)$ lies in a compact set, then $\{w_n\}$ converges to a fixed point of $F$.
\end{theorem}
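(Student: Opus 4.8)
The plan is to run the same two-case scheme that drives the proof of Theorem~\ref{l3}, but with two replacements dictated by the weaker hypotheses: the self-map fixed point supplied by Theorem~\ref{ll1} is replaced by the proximal fixed pair supplied by Theorem~\ref{l2}, and the von Neumann operator attached to each step must be carried through every estimate. Write $P_M,P_N$ for the metric projections onto $M,N$ and let $P=P_M\circ P_N$ be the alternating (von Neumann) operator. Since $w_n\in M$ and $F(M)\subseteq M$, the Mann point $y_n=(1-\eta_n)w_n+\eta_n Fw_n$ lies in $M$, so $w_{n+1}=P^n y_n\in M$ and the whole orbit stays in $M$. By Theorem~\ref{l2} there exist $p\in M$, $q\in N$ with $Fp=p$, $Fq=q$ and $\|p-q\|=d(M,N)$; because $\|p-q\|=d(M,N)$ and the space is strictly convex, $P_N p=q$ and $P_M q=p$, hence $Pp=p$.

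First I would record the estimates that are actually available, noting that relative nonexpansiveness only compares a point of $M$ with a point of $N$, so \emph{every} norm bound must be routed through $q\in N$. For $w_n\in M$, condition~(2) gives $\|Fw_n-q\|=\|Fw_n-Fq\|\le\|w_n-q\|$, whence the Mann step is nonexpansive toward $q$: $\|y_n-q\|\le\|w_n-q\|$. In a Hilbert space $P_M,P_N$ are nonexpansive, so $P^n$ is nonexpansive; combining this with Cheney--Goldstein/von Neumann convergence of the alternating projections (the orbit of $P$ converges to a point of $\mathrm{Fix}(P)=M_0$) I would produce a suitable nonincreasing reference quantity with limit $d$ and show $d(w_n,N)$ is driven down to $d(M,N)$, together with asymptotic regularity $\|w_{n+1}-w_n\|\to 0$. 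This is exactly where the von Neumann machinery does the work that plain nonexpansiveness did in Theorem~\ref{l3}, and I expect establishing a clean monotone reference quantity \emph{through} the projection $P^n$ (rather than through a single nonexpansive $F$) to be one of the two main obstacles.

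With those facts in hand the core step $\|w_n-Fw_n\|\to 0$ transcribes from Theorem~\ref{l3}: assuming a subsequence with $\|w_{n_k}-Fw_{n_k}\|\ge\epsilon>0$, I would apply Proposition~\ref{l5} to the convex combination defining $y_{n_k}$ (measuring everything from $q$), extract a uniform-convexity gain of the form $\big(1-l\,\delta(\tfrac{\epsilon}{d+\xi})\big)(d+\xi)$, and choose $\xi$ small enough to contradict the limiting value $d$. Lemma~\ref{lem1} then yields the asymptotic regularity used above.

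The second and sharper obstacle is the final passage to a fixed point, because a relatively nonexpansive map need not be continuous, so the argument of Theorem~\ref{l3} (``since $F$ is continuous, $Fw_{n_k}\to Fa$'') is unavailable. My plan to get around it exploits the Hilbert structure, which is why the paper restricts to it. Compactness of $F(M)$ and $\|w_n-Fw_n\|\to 0$ give $w_{n_k}\to a$ with $Fw_{n_k}\to a$, and the second paragraph places $a\in M_0$ with proximal partner $z_a=P_N a\in N_0$. Since $\|a-z_a\|=d(M,N)$ and $F(N)\subseteq N$, condition~(2) forces $\|Fa-Fz_a\|=d(M,N)$, so $F$ maps minimal pairs to minimal pairs and $F(M_0)\subseteq M_0$, $F(N_0)\subseteq N_0$. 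In a Hilbert space $N_0=M_0+g$ for a fixed displacement $g$, so partners satisfy $\|u-v\|=\|u'-v'\|$; applying condition~(2) in both orientations ($\|Fu-Fv'\|\le\|u-v'\|$ and $\|Fu'-Fv\|\le\|u'-v\|$) and adding the two expanded inner-product inequalities yields $\|Fu-Fv\|\le\|u-v\|$ for $u,v\in M_0$. Thus $F$ restricts to a genuinely nonexpansive, hence continuous, self-map of $M_0$; since the cluster point $a$ lies in $M_0$, continuity there gives $Fw_{n_k}\to Fa$, so $Fa=a$, and convergence of the full sequence follows from the monotone reference quantity together with asymptotic regularity.
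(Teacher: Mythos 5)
First, a point of reference: the paper does not prove Theorem~\ref{l4} at all --- it is quoted from \cite{1} as a preliminary --- so the fair comparison is with the paper's own analogues, Theorem~\ref{l12} (Ishikawa without $P^n$, under the extra hypothesis $d(w_n,M_0)\to 0$) and Theorem~\ref{l13} (Ishikawa with $P^n$). Your skeleton agrees with theirs: take the fixed point $q\in N_0$ supplied by Theorem~\ref{l2}, route every estimate through $q$ via relative nonexpansiveness, and run the uniform-convexity contradiction of Proposition~\ref{l5} on the Mann convex combination; that core is sound. But two of the steps you defer are exactly where the content lies, and they are not closed. (i) For the monotone reference quantity you invoke only nonexpansiveness of $P_M,P_N$, which gives $\|P^n y_n-P^n q\|\le\|y_n-q\|$ and is useless here because $P^n q=p\neq q$. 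What is actually needed is the Fej\'er-type inequality $\|P(u)-a\|\le\|u-a\|$ for every $a\in M_0\cup N_0$ (Lemma~\ref{l11}, obtained from two applications of Proposition~\ref{l10} together with $P_NP_Ma=a$ for $a\in N_0$); you flag this as an obstacle but never resolve it, and without it the limit $d$ does not exist. (ii) The claim that the cluster point $a$ lies in $M_0$ is asserted, not proved: applying $P$ $n$ times at stage $n$ does not by itself drive $d(w_n,N)$ to $d(M,N)$. The mechanism in Theorem~\ref{l13} is a Dini-type uniform convergence of $g_n(w)=\|P^n(w)-u(w)\|$ on a compact set built from the iterates, using Lemma~\ref{l11} for monotonicity and Theorems~\ref{l6} and~\ref{l7} for pointwise convergence of the von Neumann sequences; nothing in your sketch substitutes for it.

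The endgame is also wrong as written. Your parallelogram-law computation showing that $F$ restricts to a nonexpansive map of $M_0$ is correct and a genuinely nice observation, but it does not finish the proof: continuity of $F|_{M_0}$ at $a$ controls $F$ only on points of $M_0$, whereas the iterates $w_{n_k}$ lie in $M\setminus M_0$ in general and $F$ is not assumed continuous on $M$. So the step ``$w_{n_k}\to a$, hence $Fw_{n_k}\to Fa$'' fails. The repair is the argument closing Theorem~\ref{l12}: take the proximal partner $b\in N_0$ of $a$; from $\|Fw_{n_k}-Fb\|\le\|w_{n_k}-b\|\to d(M,N)$ and $Fw_{n_k}\to a$ deduce $\|a-Fb\|=d(M,N)$, so $Fb=b$ by uniqueness of nearest points in a strictly convex space, and then $\|Fa-b\|=\|Fa-Fb\|\le\|a-b\|=d(M,N)$ forces $Fa=a$. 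For the same reason, convergence of the full sequence does not follow from ``monotone reference quantity plus asymptotic regularity'': the monotone quantity is $\|w_n-q\|$, not $\|w_n-a\|$, and relative nonexpansiveness gives no bound on $\|Fw_n-Fa\|$ when both points lie in $M$; this too must be routed through $b$ or through the Dini argument of Theorem~\ref{l13}.
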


\par Let $M$ be a convex closed subset of a Hilbert Space $X$. Then for $w\in X,$ we know that $ P_M(w)$ is the nearest to $w$ and unique point of $M$.
And also $P_M$ is non expansive and distinguished by the Kolmogorov's criterion:\\
$\left\langle w-P_Mw, P_Mw-a\right\rangle \geq 0,$ for all $w\in X$ and $a\in M.$

\par Let $M$ and $N$ be two convex closed subsets of $X.$ Define
\begin{eqnarray*}
	P(w) = P_M(P_N(w)) \, \text{for each} \, w \in X,
\end{eqnarray*} 
then the sequences $\{P^n(w)\} \subset M$ and $\{ P_N(P^n(w)) \} \subset  N.$ When $M$ and $N$ are closed, 
the convergence of these sequences in norm were proved by von Neumann \cite{8}. 
The sequences $\{P^n(w)\}$ and $\{ P_N(P^n(w)) \}$ 
are called von  Neumann  sequences or  alternating  projection  algorithm for two sets.

\begin{definition}\cite{4}
	Let $M$ and $N$ be nonempty closed convex subsets of a Hilbert space $X$. We say that $(M,N)$ is boundedly regular if for each bounded subset $S$ of $X$ and for each $\epsilon>0$ there exist $\delta>0$ such that 
	\begin{eqnarray}
	max\{d(w,M),d(w,N-v)\}\leq \delta \Rightarrow d(w,N)\leq \epsilon,\,\, \forall w\in X,
	\end{eqnarray}
	where $v=P_{\overline{N-M}}(0),$ the displacement vector from $M$ to $N$. $(v$ is the unique vector satisfying $\parallel v\parallel= d(M,N))$.
\end{definition} 

\begin{theorem}\cite{4}\label{l6}
	If $(M,N)$ is boundedly regular, then the von Neumann sequences converges in norm.
\end{theorem}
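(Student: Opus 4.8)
The plan is to run the standard alternating-projection analysis, turning the (possibly) non-intersecting pair $(M,N)$ into an intersecting pair by translating $N$ along the displacement vector $v=P_{\overline{N-M}}(0)$. Throughout write $x_n=P^n(w)\in M$ and $y_n=P_N(x_n)\in N$, so that $x_{n+1}=P_M(y_n)$, and recall that metric projections onto closed convex subsets of a Hilbert space are firmly nonexpansive and obey Kolmogorov's criterion. Put $E:=M\cap(N-v)$. A short computation using $\|v\|=d(M,N)$ shows that $x\in E$ iff $x\in M$, $x+v\in N$, and this pair realises the minimal distance; equivalently $P_N x=x+v$ and $P_M(x+v)=x$, so that $E=\mathrm{Fix}(P_M P_N)$. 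Norm convergence of both von Neumann sequences will follow once I show $x_n\to\bar x$ for some $\bar x\in E$, since then $y_n=P_N(x_n)\to P_N(\bar x)$ by continuity of $P_N$.

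First I would record the geometry of the gap. Because $x_n\in M$ and $y_n\in N$, the inequalities $\|x_{n+1}-y_n\|=d(y_n,M)\le\|x_n-y_n\|$ and $\|x_{n+1}-y_{n+1}\|=d(x_{n+1},N)\le\|x_{n+1}-y_n\|$ show that $\{\|x_n-y_n\|\}$ is nonincreasing and bounded below by $d(M,N)$. It is classical in alternating-projection theory that this gap in fact decreases to $d(M,N)=\|v\|$. Granting this, $y_n-x_n$ is a minimising sequence for the norm in the closed convex set $\overline{N-M}$, whose unique minimal-norm element is $v$; Kolmogorov's inequality $\langle v,u\rangle\ge\|v\|^2$ for $u\in\overline{N-M}$ then gives $\|(y_n-x_n)-v\|^2\le\|y_n-x_n\|^2-\|v\|^2\to0$, so $y_n-x_n\to v$ in norm. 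Consequently $\|x_{n+1}-x_n\|\to0$, and from $\|(x_n+v)-y_n\|=\|v-(y_n-x_n)\|\to0$ with $y_n\in N$ I obtain $d(x_n,N-v)\to0$, while trivially $d(x_n,M)=0$.

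Next I would invoke bounded regularity. The orbit $\{x_n\}$ is bounded (it lies in $M$, which is bounded in the cases of interest), and a weak cluster point together with the asymptotic regularity $\|x_{n+1}-x_n\|\to0$ and demiclosedness of $I-P_M P_N$ at $0$ shows $E\neq\emptyset$; in particular $\{x_n\}$ is Fej\'er monotone with respect to $E$, since $\|x_{n+1}-e\|=\|P_M P_N x_n-P_M P_N e\|\le\|x_n-e\|$ for every $e\in E$. Applying bounded regularity to the bounded set $\{x_n\}$ and the pair $(M,N-v)$, the relation $\max\{d(x_n,M),d(x_n,N-v)\}\to0$ forces $d(x_n,E)\to0$. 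A Fej\'er monotone sequence whose distance to $E$ tends to zero converges in norm to a point of $E$; thus $x_n\to\bar x\in E$, and hence $y_n\to P_N(\bar x)\in N$, completing the argument.

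The step I expect to be the crux is the use of bounded regularity to pass from $\max\{d(x_n,M),d(x_n,N-v)\}\to0$ to $d(x_n,E)\to0$: this nonlinear regularity estimate is exactly what upgrades the weak convergence, which holds for alternating projections between arbitrary closed convex sets, to norm convergence, and it is indispensable, as Hundal's example shows that norm convergence can fail without such a hypothesis. The only other nontrivial input is the classical fact that the gap decreases precisely to $d(M,N)$; once that is in hand, the reduction to the intersecting pair via $v$ and the Fej\'er monotone convergence lemma are routine.
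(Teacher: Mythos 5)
This theorem is imported verbatim from Bauschke--Borwein \cite{4}; the paper offers no proof of its own, so there is nothing internal to compare against. Your argument is, in substance, the proof from that source: translate $N$ by the displacement vector $v$, show the difference vectors $y_n-x_n$ converge in norm to $v$ via Kolmogorov's criterion, deduce $\max\{d(x_n,M),d(x_n,N-v)\}\to 0$, and combine bounded regularity with Fej\'er monotonicity of $\{x_n\}$ with respect to $E=M\cap(N-v)=\mathrm{Fix}(P_MP_N)$ to upgrade to norm convergence. (Note you have silently corrected two typos in the paper's Definition 1.11: the conclusion there should read $d(w,E)\le\epsilon$, not $d(w,N)\le\epsilon$, and the quantifier should be $\forall w\in S$; your reading is the intended one.) Two caveats. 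First, the step you flag as ``classical'' --- that the nonincreasing gap $\|x_n-y_n\|$ converges to $d(M,N)$ rather than to some larger limit --- is genuinely the deepest input: it does not follow from monotonicity alone but rests on the asymptotic theory of (firmly) nonexpansive iterations (Pazy, Bruck--Reich), and is exactly Theorem 4.8 of \cite{4}; a complete proof would have to supply it. Second, your derivation of $E\ne\emptyset$ via a weak cluster point presupposes the orbit is bounded, which you justify only by boundedness of $M$ ``in the cases of interest''; this is mildly circular, since Fej\'er monotonicity (which gives boundedness for free) already needs a point of $E$. The cleaner route is to observe that bounded regularity itself forces $E\ne\emptyset$: since $d(M,N-v)=0$, there are points of $M$ with $\max\{d(\cdot,M),d(\cdot,N-v)\}$ arbitrarily small, and the regularity implication then puts them within $\epsilon$ of $E$. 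With those two repairs the argument is correct and is the standard one.
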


\begin{theorem}\cite{4} \label{l7}
	If $M$ or $N$ is boundedly compact, then $(M,N)$ is boundedly regular.
\end{theorem}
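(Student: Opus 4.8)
The plan is to argue by contradiction, using bounded compactness to produce a single accumulation point and then letting the continuity of the distance functions finish the job. Without loss of generality assume that $M$ is boundedly compact (the case where $N$ is boundedly compact is symmetric), put $v = P_{\overline{N-M}}(0)$, and let $G := M \cap (N-v)$ denote the intersection of the two aligned sets, which is the solution set that the conclusion of bounded regularity should control; note that $G$ is nonempty, since $n_0 - v \in G$ for any closest pair $(m_0,n_0)\in M\times N$. Suppose, toward a contradiction, that $(M,N)$ is \emph{not} boundedly regular. Negating the definition produces one bounded set $S\subseteq X$ and one $\epsilon>0$ for which no admissible $\delta$ works; taking $\delta=1/n$ therefore yields, for each $n$, a point $w_n\in S$ with
\[
\max\{d(w_n,M),\, d(w_n,N-v)\}\le \tfrac1n \qquad\text{but}\qquad d(w_n,G)>\epsilon .
\]

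Next I would extract a convergent subsequence. Since $M$ is a nonempty closed convex subset of the Hilbert space $X$, the projection $m_n=P_M(w_n)\in M$ satisfies $\parallel w_n-m_n\parallel = d(w_n,M)\le 1/n$. As $\{w_n\}\subseteq S$ is bounded, the sequence $\{m_n\}$ is bounded as well, so the bounded compactness of $M$ furnishes a subsequence $m_{n_k}\to m$ with $m\in M$ (using that $M$ is closed). From $\parallel w_{n_k}-m_{n_k}\parallel\le 1/n_k\to 0$ we then get $w_{n_k}\to m$. Because $w\mapsto d(w,N-v)$ is $1$-Lipschitz, passing to the limit gives
\[
d(m,N-v)=\lim_{k\to\infty} d(w_{n_k},N-v)=0,
\]
and since $N-v$ is closed this forces $m\in N-v$. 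Hence $m\in M\cap(N-v)=G$.

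Finally, $d(w_{n_k},G)\le \parallel w_{n_k}-m\parallel\to 0$, which contradicts $d(w_{n_k},G)>\epsilon$ for all $k$; therefore $(M,N)$ is boundedly regular. The one genuinely load-bearing step is the subsequence extraction: bounded regularity is a uniform, quantitative statement, and the only device available to upgrade the pointwise data $d(w_n,M),\,d(w_n,N-v)\to 0$ into a single limit point lying in $G$ is precisely the bounded compactness hypothesis. Every other ingredient is soft—the Lipschitz continuity of the distance function and the closedness of the translate $N-v$. The main point to watch is the bookkeeping with the displacement vector $v$, namely that one must target $d(\cdot,G)$ with $G=M\cap(N-v)$ rather than $N$ itself, since for $d(M,N)>0$ the minimal distance from a point of $G$ to $N$ is $\parallel v\parallel=d(M,N)>0$.
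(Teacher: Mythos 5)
Your proof is correct, and since the paper states this result without proof (it is quoted from Bauschke--Borwein \cite{4}), the right comparison is with the source, where the argument is essentially the same: negate bounded regularity along $\delta=1/n$, project onto the boundedly compact set to extract a convergent subsequence, and show the limit lies in $M\cap(N-v)$, contradicting $d(w_{n_k},G)>\epsilon$. You were also right to read the conclusion of the paper's definition as $d\big(w,M\cap(N-v)\big)\le\epsilon$ for $w\in S$ rather than the printed $d(w,N)\le\epsilon$ for all $w\in X$ (which fails whenever $d(M,N)>0$); your parenthetical justification that $G\neq\emptyset$ via a closest pair is not needed and not always available a priori, but it is harmless since the limit point you construct lands in $G$ anyway.
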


\begin{lemma}\cite{3} \label{l8}
	Let $M$ be a nonempty closed and convex subset and $N$ be nonempty closed subset of a uniformly convex Banach space. Let $\{w_n\}$ and $\{a_n\}$ be sequences in $M$ and $\{z_n\}$ be a sequence in $N$ satisfying:
	\begin{enumerate}
		\item $\parallel w_n-z_n\parallel\rightarrow d(M,N), \text{and}$
		\item $\parallel a_n-z_n\parallel\rightarrow d(M,N). \text{Then} \parallel w_n-a_n \parallel \text{converges to zero}.$
	\end{enumerate}
\end{lemma}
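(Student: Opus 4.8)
The plan is to set $d := d(M,N)$ and split into two cases according to whether $d = 0$ or $d > 0$.

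If $d = 0$, the conclusion is immediate from the triangle inequality:
$$\|w_n - a_n\| \leq \|w_n - z_n\| + \|z_n - a_n\|,$$
since both terms on the right tend to $d = 0$ by hypotheses (1) and (2). The substantive case is $d > 0$, which I would handle by contradiction, exploiting the convexity of $M$ together with the uniform convexity of $X$ exactly as in Case (ii) of Theorem \ref{l3}. Suppose $\|w_n - a_n\| \not\to 0$; then there are $\epsilon > 0$ and a subsequence with $\|w_{n_k} - a_{n_k}\| \geq \epsilon$ for all $k$. Set $u_k := w_{n_k} - z_{n_k}$ and $v_k := a_{n_k} - z_{n_k}$, so that by (1)--(2) we have $\|u_k\| \to d$ and $\|v_k\| \to d$, while $\|u_k - v_k\| = \|w_{n_k} - a_{n_k}\| \geq \epsilon$.

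The key structural observation is that, since $M$ is convex and $w_{n_k}, a_{n_k} \in M$, the midpoint $\tfrac{1}{2}(w_{n_k} + a_{n_k})$ lies in $M$; as $z_{n_k} \in N$, the definition of $d(M,N)$ forces
$$\left\| \tfrac{u_k + v_k}{2} \right\| = \left\| \tfrac{w_{n_k} + a_{n_k}}{2} - z_{n_k} \right\| \geq d.$$
To contradict this lower bound I would invoke Proposition \ref{l5} with the weight $\eta = \tfrac{1}{2}$, for which $\min(\eta, 1-\eta) = \tfrac{1}{2}$. Fixing a small $\xi > 0$, the convergences $\|u_k\|, \|v_k\| \to d$ give $\|u_k\| \leq d + \xi$ and $\|v_k\| \leq d + \xi$ for all large $k$, whence Proposition \ref{l5} yields
$$\left\| \tfrac{u_k + v_k}{2} \right\| \leq \Big( 1 - \delta\big(\tfrac{\epsilon}{d+\xi}\big) \Big)(d + \xi).$$

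Because the modulus of convexity $\delta$ is strictly positive at positive arguments, we have $\big(1 - \delta(\tfrac{\epsilon}{d})\big) d < d$; by continuity in $\xi$ this persists for $\xi$ small, so $\big(1 - \delta(\tfrac{\epsilon}{d+\xi})\big)(d+\xi) < d$ for a suitable choice of $\xi$. This contradicts $\|\tfrac{u_k+v_k}{2}\| \geq d$, completing the argument. I expect the only delicate point to be the bookkeeping in this final step, namely ensuring the perturbation $\xi$ can be absorbed so that $\big(1 - \delta(\tfrac{\epsilon}{d+\xi})\big)(d+\xi) < d$ genuinely holds; this rests on the continuity and monotonicity of the modulus of convexity of the uniformly convex space $X$, precisely the regularity already used in Theorem \ref{l3}. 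I would also remark that convexity of $M$ is essential, as it is what places the midpoint in $M$, whereas $N$ need only be closed, consistent with the stated hypotheses.
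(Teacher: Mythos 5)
Your argument is correct, and since the paper states Lemma~\ref{l8} as an imported result from \cite{3} without proof, the right comparison is with the cited source and with the uniform-convexity technique the paper itself uses in Case~(ii) of Theorem~\ref{l3}: your proof is exactly that standard argument (midpoint $\tfrac12(w_{n_k}+a_{n_k})\in M$ by convexity forces $\bigl\|\tfrac{u_k+v_k}{2}\bigr\|\geq d$, while Proposition~\ref{l5} with $\eta=\tfrac12$ forces it below $d$ for small $\xi$). The only bookkeeping worth recording is the one you already flag, namely that $\bigl(1-\delta(\tfrac{\epsilon}{d+\xi})\bigr)(d+\xi)<d$ for small $\xi$, which follows from monotonicity of the modulus of convexity exactly as in the paper's own proofs.
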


\begin{corollary}\cite{3} \label{l9}
	Let $M$ be a nonempty closed convex subset and $N$ be a nonempty closed subset of  uniformly convex Banach space. Let $\{ w_n \}$ be a sequence in $M$ and $z_0 \in N$ 
	such that $\parallel w_n-z_0 \parallel \rightarrow d(M,N).$ Then $\{ w_n \}$ converges to $P_M(z_0).$
\end{corollary}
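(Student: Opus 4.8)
The plan is to reduce the corollary to Lemma~\ref{l8} by feeding it two constant sequences. The only real preliminary work is to confirm that the projection $P_M(z_0)$ is well defined and that it realizes the gap distance $d(M,N)$; once that is in hand, the convergence is immediate from Lemma~\ref{l8}.

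First I would check that $P_M(z_0)$ makes sense. Since $X$ is uniformly convex, it is reflexive (by Milman--Pettis) and strictly convex; hence the metric projection onto the nonempty closed convex set $M$ exists and is single valued, and I may write $p:=P_M(z_0)$. Next I would identify the distance that $p$ achieves. On one side, $p\in M$ and $z_0\in N$ give $d(M,N)\le\|p-z_0\|=d(z_0,M)$. On the other side, each $w_n\in M$ yields $d(z_0,M)\le\|w_n-z_0\|$, and letting $n\to\infty$ in the hypothesis $\|w_n-z_0\|\to d(M,N)$ produces $d(z_0,M)\le d(M,N)$. Combining the two inequalities gives the identity $\|p-z_0\|=d(z_0,M)=d(M,N)$.

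With this identity established, I would invoke Lemma~\ref{l8} with the constant sequences $a_n\equiv p$ and $z_n\equiv z_0$. All hypotheses are transparent: $\{w_n\}$ and $\{a_n\}$ lie in $M$, $\{z_n\}$ lies in $N$; condition (1) is exactly the assumed $\|w_n-z_0\|\to d(M,N)$; and condition (2) holds because $\|a_n-z_n\|=\|p-z_0\|=d(M,N)$ for every $n$. The lemma then forces $\|w_n-a_n\|=\|w_n-p\|\to 0$, which is precisely the assertion that $\{w_n\}$ converges to $P_M(z_0)$.

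The step demanding the most care is the distance identity $\|P_M(z_0)-z_0\|=d(M,N)$, since it is the squeeze argument that licenses taking $z_n\equiv z_0$ and $a_n\equiv P_M(z_0)$ as the matching ``best'' sequence in Lemma~\ref{l8}. I expect no further difficulty, because every quantitative use of uniform convexity is already absorbed into Lemma~\ref{l8}; the corollary is essentially the degenerate, constant-sequence instance of that lemma.
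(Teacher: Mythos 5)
Your proof is correct: the paper itself states Corollary~\ref{l9} without proof (it is quoted from \cite{3}), and your argument — establishing $\|P_M(z_0)-z_0\|=d(z_0,M)=d(M,N)$ by the squeeze $d(M,N)\le\|P_M(z_0)-z_0\|\le\lim_n\|w_n-z_0\|=d(M,N)$ and then applying Lemma~\ref{l8} with the constant sequences $a_n\equiv P_M(z_0)$ and $z_n\equiv z_0$ — is exactly the standard derivation used in that reference. No gaps; the existence and uniqueness of $P_M(z_0)$ via reflexivity and strict convexity is correctly justified.
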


\begin{prop}\cite{2} \label{l10}
	Let $M$ and $N$ be two closed and convex subsets of a Hilbert space $X$. Then $P_N(M)\subseteq N, P_M(N)\subseteq M,$ and $\parallel P_Nw-P_Mz\parallel\leq \parallel w-z \parallel $ for $w\in M$ and $z\in N$.
\end{prop}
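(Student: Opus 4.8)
The plan is to dispose of the two set-inclusions first, since they are immediate, and then to put all the work into the norm inequality, which is the only substantive assertion. The inclusions $P_N(M)\subseteq N$ and $P_M(N)\subseteq M$ follow straight from the definition of the metric projection in a Hilbert space: because $M,N$ are closed and convex, for any $w\in X$ the point $P_N(w)$ is the (unique) nearest point of $N$ to $w$ and in particular $P_N(w)\in N$; restricting $w$ to range over $M$ gives $P_N(M)\subseteq N$, and the symmetric argument gives $P_M(N)\subseteq M$. This also records that $P_N$ and $P_M$ are genuinely single-valued here, so the inequality even makes sense.

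For the inequality I would use Kolmogorov's criterion recalled just above the proposition, i.e.\ the variational characterization of the projection. Write $a=P_N w\in N$ and $b=P_M z\in M$. The criterion for the projection onto $N$ reads $\langle w-a,\,a-n\rangle\ge 0$ for every $n\in N$, and the decisive move is to test it at $n=z$, which is admissible \emph{precisely because} $z\in N$. Symmetrically, the criterion for the projection onto $M$ gives $\langle z-b,\,b-m\rangle\ge 0$ for every $m\in M$, which I test at $m=w\in M$. Thus the two hypotheses $w\in M$ and $z\in N$ are used exactly to make each point an allowable test vector for the other set's projection.

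To convert these two inequalities into the bound, I would set $p=w-a$, $q=z-b$ and $r=w-z$. Since $a-z=-p+r$ and $b-w=-q-r$, the two tested inequalities simplify to $\langle p,r\rangle\ge\|p\|^{2}$ and $\langle q,r\rangle\le-\|q\|^{2}$. Using $a-b=r+q-p$ and expanding,
\[
\|a-b\|^{2}=\|r\|^{2}+\|p\|^{2}+\|q\|^{2}-2\langle p,r\rangle-2\langle p,q\rangle+2\langle q,r\rangle ,
\]
I substitute the two inequalities to control the $\langle p,r\rangle$ and $\langle q,r\rangle$ terms; they collapse the corresponding squared norms and leave $\|a-b\|^{2}\le\|r\|^{2}-\|p\|^{2}-\|q\|^{2}-2\langle p,q\rangle=\|w-z\|^{2}-\|p+q\|^{2}$. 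Since $\|p+q\|^{2}\ge 0$, this gives $\|P_N w-P_M z\|\le\|w-z\|$, and in fact a slightly sharper estimate.

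The routine parts here are the two inclusions and the final expansion. The one place needing genuine insight — and hence the main obstacle — is the choice of test points in Kolmogorov's criterion: recognizing that the $N$-projection should be tested against $z$ and the $M$-projection against $w$ (rather than comparing each projection with its own source point) is exactly what forces the cross terms to cancel and produces the nonnegative surplus $\|p+q\|^{2}$. An essentially equivalent alternative would be to invoke the firm nonexpansivity of metric projections onto convex sets in a Hilbert space, but the Kolmogorov argument is self-contained given the material already recalled.
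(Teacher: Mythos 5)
The paper itself offers no proof of this proposition: it is quoted verbatim from reference [2] (Eldred--Kirk--Veeramani) and used as a known tool, so there is nothing internal to compare your argument against. Judged on its own, your proof is correct and complete. The two inclusions are indeed immediate from the definition of the metric projection onto a closed convex subset of a Hilbert space. For the inequality, your choice of test points in Kolmogorov's criterion is the right one: testing the $N$-projection at $n=z$ and the $M$-projection at $m=w$ is exactly where the hypotheses $w\in M$, $z\in N$ enter. I checked the algebra: with $p=w-P_Nw$, $q=z-P_Mz$, $r=w-z$, the two variational inequalities become $\langle p,r\rangle\ge\|p\|^{2}$ and $\langle q,r\rangle\le-\|q\|^{2}$, and expanding $\|P_Nw-P_Mz\|^{2}=\|r+q-p\|^{2}$ and substituting does yield $\|P_Nw-P_Mz\|^{2}\le\|w-z\|^{2}-\|p+q\|^{2}\le\|w-z\|^{2}$. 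Your byproduct is in fact the standard sharper "firmly nonexpansive"-type estimate for this cross-projection inequality, which is strictly more than the proposition asserts. The only cosmetic caveat is that the Kolmogorov criterion as recalled in the paper is stated just before the proposition only for a single convex set $M$; you apply it to both $M$ and $N$, which is of course legitimate but worth saying explicitly since both sets are assumed closed and convex.
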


\begin{lemma} \label{l11}
	Let $M$ and $N$ be two closed and convex subsets of a Hilbert space $X$. For each $w\in X,$
	\begin{eqnarray*}
		\parallel P^{n+1}(w)-a\parallel \leq \parallel P^{n}(w)-a \parallel, \text{for each} \,\,  a \in M_0\cup N_0.
	\end{eqnarray*}
\end{lemma}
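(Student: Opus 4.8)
The plan is to split the argument according to whether the point $a$ lies in $M_0$ or in $N_0$, and in each case to reduce the claim to the non-expansiveness of the metric projections together with Proposition \ref{l10}. Throughout I will use that in a Hilbert space the metric projections $P_M$ and $P_N$ onto the closed convex sets $M$ and $N$ are non-expansive, so that $P=P_M\circ P_N$ is non-expansive as well, and that $P^{n}(w)\in M$ for every $n\geq 1$ (as recorded in the paragraph defining the von Neumann sequence, where $\{P^n(w)\}\subset M$).

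First I would record the elementary but crucial fact that every point of $M_0$ is a fixed point of $P$. Indeed, if $a\in M_0$ then there is $b\in N$ with $\parallel a-b\parallel=d(M,N)$; since $a\in M$ forces $d(a,N)\geq d(M,N)$, this gives $d(a,N)=d(M,N)$ and $P_N(a)=b$, and since $b\in N$ forces $\parallel m-b\parallel\geq d(M,N)$ for all $m\in M$, it gives $P_M(b)=a$; hence $P(a)=P_M(P_N(a))=a$. Symmetrically, for $a\in N_0$ the point $a_M:=P_M(a)$ lies in $M_0$, satisfies $\parallel a_M-a\parallel=d(M,N)$, and $P_N(a_M)=a$. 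For the case $a\in M_0$ the conclusion is then immediate: since $a=P(a)$ and $P$ is non-expansive,
\[
\parallel P^{n+1}(w)-a\parallel=\parallel P(P^{n}(w))-P(a)\parallel\leq\parallel P^{n}(w)-a\parallel .
\]

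The case $a\in N_0$ is where the work lies, because such an $a$ is not a fixed point of $P$ (it does not even belong to $M$ when $d(M,N)>0$), so the plain non-expansiveness of $P$ no longer suffices. Here I would write $x:=P^{n}(w)\in M$ and $y:=P_N(x)\in N$, so that $P^{n+1}(w)=P_M(y)$, and use $a=P_N(a_M)$ with $a_M=P_M(a)\in M$. Applying Proposition \ref{l10} twice — first to the pair $(a_M,y)\in M\times N$ and then to the pair $(x,a)\in M\times N$ — yields
\[
\parallel P^{n+1}(w)-a\parallel=\parallel P_N(a_M)-P_M(y)\parallel\leq\parallel a_M-y\parallel=\parallel P_M(a)-P_N(x)\parallel\leq\parallel x-a\parallel=\parallel P^{n}(w)-a\parallel .
\]

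The main obstacle is precisely the bookkeeping in this last chain: one must orient each difference so that it matches the hypothesis $\parallel P_Nw-P_Mz\parallel\leq\parallel w-z\parallel$ of Proposition \ref{l10}, i.e. with the $M$-argument feeding $P_N$ and the $N$-argument feeding $P_M$, and one must know that $x=P^{n}(w)\in M$ for the second application to be legitimate. This is also why the inequality is read along the iterates, which lie in $M$: for the raw seed $w$, which need not lie in $M$, the second application is unavailable, and indeed the estimate can genuinely fail at that first step.
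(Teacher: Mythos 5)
Your proof is correct. The paper states Lemma \ref{l11} without supplying any proof, so there is nothing to compare against directly; your argument — observing that every $a\in M_0$ is a fixed point of $P=P_M\circ P_N$ and using non-expansiveness of $P$, then handling $a\in N_0$ by two applications of Proposition \ref{l10} — is the natural route and uses exactly the machinery the paper has already set up. Your closing caveat is also well taken: for $a\in N_0$ the chain requires $P^{n}(w)\in M$, i.e. $n\ge 1$ or $w\in M$, and the inequality can genuinely fail at $n=0$ for a seed outside $M$, which is a harmless imprecision in the lemma's statement since in its only use (Theorem \ref{l13}) the point $a=u(w)$ lies in $M_0$, where your first, unconditional case applies.
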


\section{Main Results}

\begin{theorem} \label{l12}
	Let $M$ and $N$ be nonempty bounded closed convex subsets of a uniformly convex Banach space and suppose $F: M\cup N \rightarrow M\cup N$ satisfies
	\begin{enumerate}
		\item $F(M) \subseteq M$ and $F(N) \subseteq N ;$ and
		\item $\parallel Fw-Fz \parallel \leq \parallel w-z \parallel $ for $w\in M, z\in N.$
	\end{enumerate}
	Let $w_0\in M,$ and define $w_{n+1}=(1-\eta_n)w_n + \eta_n F \big((1-\delta_n)w_n+\delta_n Fw_n \big),\, \eta_n, \delta_n \in (\epsilon, 1-\epsilon),$ where
	$\epsilon\in (0,1/2)$ and $n=0,1,2,...$. Suppose $d(w_n, M_0) \rightarrow 0,$ then $lim_{n \rightarrow \infty}\parallel w_n-Fw_n \parallel = 0.$ Moreover, if $F(M)$ lies in a compact set, then $\{w_n\}$ converges to a fixed point of $F$.
\end{theorem}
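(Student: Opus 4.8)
The plan is to run the template of Theorem \ref{l3}, but to replace the genuine fixed point used there as an anchor by a point of $N$ fixed by $F$, whose existence is guaranteed here by Theorem \ref{l2}. First I would record that the iterates never leave $M$: since $w_0\in M$, $F(M)\subseteq M$ and $M$ is convex, induction shows $y_n:=(1-\delta_n)w_n+\delta_n Fw_n\in M$ and hence $w_{n+1}\in M$ for every $n$. By Theorem \ref{l2} choose $w^\ast\in M$ and $z^\ast\in N$ with $Fw^\ast=w^\ast$, $Fz^\ast=z^\ast$ and $\parallel w^\ast-z^\ast\parallel=d(M,N)$. Taking $z^\ast$ as anchor and using relative nonexpansiveness in the form $\parallel Fx-z^\ast\parallel=\parallel Fx-Fz^\ast\parallel\le\parallel x-z^\ast\parallel$ for $x\in M$ (applied to $x=w_n$ and then to $x=y_n$), the same chain of inequalities as in Theorem \ref{l3} gives $\parallel w_{n+1}-z^\ast\parallel\le\parallel w_n-z^\ast\parallel$, so $\parallel w_n-z^\ast\parallel\downarrow d^\ast$ for some $d^\ast\ge d(M,N)$.

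Next I would prove asymptotic regularity exactly as in Theorem \ref{l3}. If $d^\ast=0$ then $w_n\to z^\ast$ and $\parallel Fw_n-z^\ast\parallel\le\parallel w_n-z^\ast\parallel\to0$, whence $\parallel w_n-Fw_n\parallel\to0$. If $d^\ast>0$ (automatic when $d(M,N)>0$), assume for contradiction that $\parallel w_{n_k}-Fw_{n_k}\parallel\ge\epsilon$ along a subsequence; writing $z^\ast-w_{n_k+1}=(1-\eta_{n_k})(z^\ast-w_{n_k})+\eta_{n_k}(z^\ast-Fy_{n_k})$, bounding $\parallel z^\ast-Fy_{n_k}\parallel\le\parallel z^\ast-y_{n_k}\parallel$, and then applying Proposition \ref{l5} to $z^\ast-y_{n_k}=(1-\delta_{n_k})(z^\ast-w_{n_k})+\delta_{n_k}(z^\ast-Fw_{n_k})$ (the two summands having norm at most $d^\ast+\xi$ and differing by $Fw_{n_k}-w_{n_k}$ of norm $\ge\epsilon$), while using $\eta_{n_k},\delta_{n_k}\in(\epsilon,1-\epsilon)$ to bound $2\min\{\eta_{n_k}\delta_{n_k},\eta_{n_k}(1-\delta_{n_k})\}$ below by some $l>0$, one obtains $\parallel z^\ast-w_{n_k+1}\parallel\le(1-l\,\delta(\tfrac{\epsilon}{d^\ast+\xi}))(d^\ast+\xi)<d^\ast$ for $\xi$ small, contradicting $\parallel w_n-z^\ast\parallel\ge d^\ast$. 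Thus $\parallel w_n-Fw_n\parallel\to0$. The normalisation argument of Theorem \ref{l3} (with $a_n=(w_{n+1}-z^\ast)/\parallel w_n-z^\ast\parallel$, $v_n=(Fy_n-z^\ast)/\parallel w_n-z^\ast\parallel$, and Lemma \ref{lem1}) then yields $\parallel w_{n+1}-w_n\parallel\to0$.

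The hard part is the passage to a fixed point, since a relatively nonexpansive $F$ need not be continuous, so one cannot simply pass to the limit in $Fw_{n_k}$. Using compactness of $\overline{F(M)}$, extract $w_{n_k}\to a$; then $Fw_{n_k}\to a$, $y_{n_k}\to a$, and from $w_{n_k+1}=(1-\eta_{n_k})w_{n_k}+\eta_{n_k}Fy_{n_k}$ together with $\parallel w_{n+1}-w_n\parallel\to0$, also $Fy_{n_k}\to a$. Since $d(w_n,M_0)\to0$ and $M_0$ is closed, $a\in M_0$, so $z_a:=P_N(a)$ is well defined (uniform convexity gives a unique nearest point) with $\parallel a-z_a\parallel=d(M,N)$. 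To reach $Fa=a$ without continuity I would project and invoke Lemma \ref{l8}: put $c_{n_k}:=P_N(y_{n_k})\in N$. Because $y_{n_k}\to a\in M_0$ gives $\parallel y_{n_k}-c_{n_k}\parallel=d(y_{n_k},N)\to d(M,N)$ and $\parallel a-c_{n_k}\parallel\le\parallel a-y_{n_k}\parallel+\parallel y_{n_k}-c_{n_k}\parallel\to d(M,N)$, a sandwich against the lower bound $d(M,N)$ combined with relative nonexpansiveness yields $\parallel Fy_{n_k}-Fc_{n_k}\parallel\to d(M,N)$ and $\parallel Fa-Fc_{n_k}\parallel\to d(M,N)$, where $Fy_{n_k},Fa\in M$ and $Fc_{n_k}\in N$. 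Lemma \ref{l8} then forces $\parallel Fy_{n_k}-Fa\parallel\to0$, i.e.\ $Fy_{n_k}\to Fa$; comparing with $Fy_{n_k}\to a$ gives $Fa=a$.

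Finally I would upgrade the subsequential limit to full convergence. From $\parallel Fa-Fz_a\parallel=d(M,N)$ (the same sandwich) and uniqueness of the nearest point, $Fz_a=P_N(Fa)=P_N(a)=z_a$, so $z_a\in N$ is itself fixed by $F$. Anchoring the monotonicity argument of the first paragraph at $z_a$ shows $\parallel w_n-z_a\parallel$ is nonincreasing; along $\{n_k\}$ it tends to $\parallel a-z_a\parallel=d(M,N)$, hence $\parallel w_n-z_a\parallel\to d(M,N)$ for the whole sequence. Corollary \ref{l9} now gives $w_n\to P_M(z_a)=a$, a fixed point of $F$, completing the proof. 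The single delicate point throughout is the discontinuity of $F$, which is circumvented by the projection-plus-Lemma \ref{l8} device rather than by a direct limit.
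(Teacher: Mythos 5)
Your proposal is correct, and its first three stages coincide with the paper's proof: anchoring at a fixed point $z^\ast\in N_0$ supplied by Theorem \ref{l2}, the monotonicity of $\parallel w_n-z^\ast\parallel$, the uniform-convexity contradiction via Proposition \ref{l5} giving $\parallel w_n-Fw_n\parallel\to 0$, the normalisation argument with Lemma \ref{lem1}, and the use of $d(w_n,M_0)\to 0$ to place the subsequential limit $a$ in $M_0$. Where you diverge is the endgame. The paper reaches $Fa=a$ directly: it picks $b\in N_0$ with $\parallel a-b\parallel=d(M,N)$, passes to the limit in $\parallel Fw_{n_k}-Fb\parallel\leq\parallel w_{n_k}-b\parallel$ to get $\parallel a-Fb\parallel=d(M,N)$, and invokes strict convexity (uniqueness of the proximal pair) twice to conclude $Fb=b$ and then $Fa=a$ --- no projections and no Lemma \ref{l8}. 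Your route through $c_{n_k}=P_N(y_{n_k})$ and Lemma \ref{l8} proves the same thing by a sandwich of distances onto $d(M,N)$; it is slightly longer but equally valid, and both devices are ultimately the same exploitation of convexity near the minimal displacement. The genuine added value of your version is the last paragraph: the paper's proof of Theorem \ref{l12} stops at $Fa=a$ and never upgrades the subsequential convergence $w_{n_k}\to a$ to convergence of the whole sequence, whereas you close this by re-anchoring the monotonicity at the fixed point $z_a=P_N(a)\in N$ and applying Corollary \ref{l9} to get $w_n\to P_M(z_a)=a$. That step (or the shorter observation that $\parallel w_n-z_a\parallel$ is nonincreasing and a subsequence of $w_n$ tends to $a$) is needed for the statement as claimed, so your write-up is in this respect more complete than the paper's.
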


\begin{proof}
	If $d(M,N) = 0,$ then $M_0 = N_0 = M\cap N$ and by Theorem \ref{l3} we can prove the result from the truth that $F: M\cap N \rightarrow M\cap N$ is nonexpansive. 
	Therefore let us take that $d(M,N)>0.$ By Theorem \ref{l2}, there exists $z\in N_0$ such that $Fz = z.$ Now,
	\begin{eqnarray*}
		\parallel w_{n+1}- z\parallel & = & \parallel(1-\eta_n)w_n + \eta_n F\big((1-\delta_n)w_n+\delta_n Fw_n\big)-z\parallel \\
		& = & \parallel (1-\eta_n)w_n + \eta_n F\big((1-\delta_n)w_n+\delta_n Fw_n\big)-\big((1-\eta_n)z + \eta_n z \big) \parallel \\
		& \leq & (1-\eta_n) \parallel w_n-z \parallel + \eta_n \parallel F\big((1-\delta_n)w_n + \delta_n Fw_n\big)-Fz \parallel \\
		& \leq & (1-\eta_n) \parallel w_n-z \parallel + \eta_n \parallel (1-\delta_n)w_n + \delta_n Fw_n-z \parallel \\
		& = & (1-\eta_n) \parallel w_n-z \parallel \\&& + \eta_n \parallel (1-\delta_n)w_n + \delta_n Fw_n-\big( (1-\delta_n)z + \delta_n z \big) \parallel \\
		& \leq & (1-\eta_n) \parallel w_n-z \parallel + \eta_n \big( \parallel (1-\delta_n)(w_n-z) \parallel + \delta_n \parallel Fw_n- Fz \parallel \big) \\
		& \leq & \parallel w_n-z \parallel.
	\end{eqnarray*}
	This implies that the sequence $\{ \parallel w_n-z \parallel \} $ is non increasing. Then we can find $d>0$ such that $\lim_{n \rightarrow \infty} \parallel w_n-z \parallel = d.$ \\
	Suppose there exists a subsequence $\{ w_{n_k} \}$ of $\{ w_n \}$ and an $\epsilon>0$ such that \\ $\parallel w_{n_k} - Fw_{n_k} \parallel \geq \epsilon > 0$ for all $k$.\\
	Since the modulus of convexity of $\delta$ of $X$ is continuous and  increasing function we choose $\xi > 0$ as small that $\Big( 1- c\delta \big(\frac{\epsilon}{d+\xi} \big) \Big)(d+\xi)< d,$ where $c > 0.$\\
	Now we choose $k$, such that $\parallel w_{n_k} - z \parallel \leq d+ \xi.$ By using Proposition \ref{l5},
	\begin{eqnarray*}
		\parallel z-w_{n_k + 1} \parallel & = & \parallel z - \big( (1-\eta_{n_k})w_{n_k} + \eta_{n_k} F\big((1-\delta_{n_k})w_{n_k} +\delta_{n_k} Fw_{n_k}\big) \big) \parallel \\
		& = & \parallel (1-\eta_{n_k})z + \eta_{n_k}z \\&& - \big( (1-\eta_{n_k})w_{n_k} + \eta_{n_k} F\big((1-\delta_{n_k})w_{n_k} +\delta_{n_k} Fw_{n_k}\big) \big) \parallel \\
		& \leq & (1-\eta_{n_k}) \parallel z - w_{n_k} \parallel + \eta_{n_k} \parallel Fz - F\big((1-\delta_{n_k})w_{n_k} +\delta_{n_k} Fw_{n_k}\big) \parallel \\
		& \leq & (1-\eta_{n_k}) (d+ \xi) + \eta_{n_k} \parallel z - \big((1-\delta_{n_k})w_{n_k} +\delta_{n_k} Fw_{n_k}\big) \parallel \\
		& = & (1-\eta_{n_k}) (d+ \xi) + \eta_{n_k} \parallel (1-\delta_{n_k})(z - w_{n_k}) +\delta_{n_k} (z-Fw_{n_k}) \parallel \\
		& \leq & (1-\eta_{n_k}) (d+ \xi) + \eta_{n_k} \Big( 1-2 \delta \Big(\frac{\epsilon}{d+\xi} \Big) \text{min} \{ \delta_{n_k}, 1-\delta_{n_k} \} \Big) (d+\xi)\\
		& = & \Big(1-\eta_{n_k} + \eta_{n_k} - 2 \eta_{n_k} \delta \Big(\frac{\epsilon}{d+\xi} \Big) \text{min} \{ \delta_{n_k}, 1-\delta_{n_k} \} \Big) (d+\xi)\\
		& = & \Big(1 - 2 \delta \Big(\frac{\epsilon}{d+\xi} \Big) \text{min} \{ \eta_{n_k} \delta_{n_k}, \eta_{n_k}(1-\delta_{n_k}) \} \Big) (d+\xi).
	\end{eqnarray*}
	Since there exists $l > 0$ such that $2 \,\text{min} \{ \eta_{n_k} \delta_{n_k}, \eta_{n_k}(1-\delta_{n_k}) \} \geq l,$
	\begin{eqnarray*}
		\Big(1 - 2 \delta \Big(\frac{\epsilon}{d+\xi} \Big) \text{min} \{ \eta_{n_k} \delta_{n_k}, \eta_{n_k}(1-\delta_{n_k}) \} \Big) (d+\xi) \leq \Big( 1- l\delta \Big(\frac{\epsilon}{d+\xi} \Big)\Big)(d +\xi).
	\end{eqnarray*}
	Suppose we choose very small $\xi > 0$, we have $\Big( 1- l\delta \Big(\frac{\epsilon}{d+\xi} \Big)\Big)(d +\xi) < d$, which is contradiction. 
	This implies that $lim_{n \rightarrow \infty}\parallel w_n-Fw_n \parallel = 0.$ 
	Now we prove that $\| w_{n+1} - w_n \parallel \rightarrow 0.$ 
	We have $\parallel w_{n+1} - w_n \parallel = \eta_n \parallel Fz_n - w_n \parallel$,
	where $z_n = (1-\delta_n)w_n + \delta_n Fw_n.$ 
	Now, we define $a_n = \frac{w_{n+1} - z}{\parallel w_n - z \parallel}$, \, $v_n = \frac{Fz_n - z}{\parallel w_n - z \parallel}$ and
	$b_n = \frac{w_n - z}{\parallel w_n - z
		\parallel}.$ One can note that $\parallel b_n \parallel = 1.$ Now,
	\begin{eqnarray*}
		\parallel Fz_n - z \parallel &=& \parallel Fz_n - Fz \parallel \\
		&\leq& \parallel z_n - z \parallel \\
		&\leq& \parallel (1-\delta_n) w_n + \delta_n Fw_n - z \parallel \\
		&\leq & \parallel (1-\delta_n) w_n + \delta_n Fw_n - ((1-\delta_n)z + \delta_n z) \parallel \\
		&\leq& (1-\delta_n) \parallel w_n - z \parallel + \delta_n \parallel Fw_n - Fz \parallel \\
		&\leq & (1-\delta_n) \parallel w_n - z \parallel + \delta_n \parallel w_n - z \parallel \\
		&=& \parallel w_n - z \parallel. 
	\end{eqnarray*}
	Therefore $\parallel v_n \parallel = \frac{\parallel Fz_n - z \parallel}{\parallel w_n - z \parallel} \leq \frac{\parallel w_n - z \parallel}{\parallel w_n - z \parallel} = 1.$ 
	From the Ishikawa iteration, we obtain $w_{n+1} - z = (1-\eta_n)(w_n - z) + \eta_n (Fz_n - z)$. 
	Dividing by $\parallel w_n - z \parallel,$ we get $$\frac{w_{n+1}- z}{\parallel w_n - z \parallel} = (1-\eta_n) \frac{(w_n - z)}{\parallel w_n - z \parallel} + \eta_n \frac{(Fz_n - z)}{\parallel w_n - z \parallel}.$$ Then $a_n = (1-\eta_n)b_n + \eta_n v_n.$ 
	Now we prove that $\parallel a_n \parallel \rightarrow 1.$ Now
	\begin{eqnarray*}
		\lim_{n \rightarrow \infty} \parallel a_n \parallel
		&=& \lim_{n\rightarrow \infty} \frac{\parallel w_{n+1} - z \parallel}{\parallel w_n - z \parallel} = \frac{d}{d} = 1. 
	\end{eqnarray*}
	By Lemma \ref{lem1}, $\parallel b_n - v_n \parallel \rightarrow 0.$ This implies that $\parallel w_n - Fz_n \parallel \rightarrow 0.$
	Since $F(M)$ is contained in a compact set, $\{Fw_n \}$ has a subsequence $\{ Fw_{n_k} \}$ that converges to a point $a \in M.$ Also $\{ w_{n_k} \}$ and $\{ w_{n_k+1} \}$ converge to $a$.\\
	Since $d(w_n, M_0) \rightarrow 0,$ there exist $\{ a_n \} \subseteq M_0,$ such that $\parallel w_n - a_n \parallel \rightarrow 0.$ Therefore, $a_{n_k} \rightarrow a,$
	which gives that $a \in M_0.$\\
	Let $D = d(M,N)$ and choose $b \in N_0$ such that $\parallel a-b \parallel = D.$ \\
	We have $ \parallel w_{n_k} - b \parallel \rightarrow \parallel a-b \parallel = D,$ and $\parallel w_{n_k} - b \parallel \geq \parallel Fw_{n_k} - Fb \parallel \rightarrow \parallel a-Fb \parallel,$ So $\parallel a-Fb \parallel = D.$ By strict convexity of the norm, $Fb=b.$ It follows that $Fa=a$.
\end{proof}

\begin{corollary} 
	Let $M$ and $N$ be nonempty bounded closed convex subsets of a uniformly convex Banach space and suppose $F: M\cup N \rightarrow M\cup N$ satisfies
	\begin{enumerate}
		\item $F(M) \subseteq M$ and $F(N) \subseteq N ;$ and
		\item $\parallel Fw-Fz \parallel \leq \parallel w-z \parallel $ for $w\in M, z\in N.$
	\end{enumerate}
	Let $w_0\in M_0,$ and define $w_{n+1}=(1-\eta_n)w_n + \eta_n F \big((1-\delta_n)w_n+\delta_n Fw_n \big),\, \eta_n, \delta_n \in (\epsilon, 1-\epsilon),$ where
	$\epsilon\in (0,1/2)$ and $n=0,1,2,...,$ then $lim_{n \rightarrow \infty}\parallel w_n-Fw_n \parallel = 0.$ Moreover, if $F(M)$ lies in a compact set, then $\{w_n\}$ converges to a fixed point of $F$.
\end{corollary}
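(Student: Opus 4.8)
The plan is to show that the entire Ishikawa sequence remains inside $M_0$, so that the extra hypothesis $d(w_n,M_0)\to 0$ of Theorem \ref{l12} is satisfied trivially (in fact $d(w_n,M_0)=0$ for every $n$), after which the corollary is an immediate consequence of that theorem. Thus the only real work is to verify that $M_0$ is stable under the two operations appearing in the iteration: applying $F$ and forming convex combinations.

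First I would prove that $M_0$ is invariant under $F$. Fix $w\in M_0$ and choose $z'\in N_0$ with $\|w-z'\|=d(M,N)$. Since $F(N)\subseteq N$ we have $Fz'\in N$, and since $F(M)\subseteq M$ we have $Fw\in M$, whence $\|Fw-Fz'\|\geq d(M,N)$. On the other hand, relative nonexpansiveness gives $\|Fw-Fz'\|\leq\|w-z'\|=d(M,N)$. Combining the two inequalities yields $\|Fw-Fz'\|=d(M,N)$, so $Fw\in M_0$ (with witness $Fz'\in N_0$); that is, $F(M_0)\subseteq M_0$. Next I would check convexity of $M_0$: given $w_1,w_2\in M_0$ with witnesses $z_1,z_2\in N$ and $\lambda\in[0,1]$, the points $w=\lambda w_1+(1-\lambda)w_2$ and $z=\lambda z_1+(1-\lambda)z_2$ lie in $M$ and $N$ respectively by convexity of $M$ and $N$, and the triangle inequality gives $\|w-z\|\leq\lambda\|w_1-z_1\|+(1-\lambda)\|w_2-z_2\|=d(M,N)$; since the reverse inequality is automatic, equality holds and $w\in M_0$.

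With these two facts I would argue by induction that $w_n\in M_0$ for all $n$. If $w_n\in M_0$, then $Fw_n\in M_0$ by invariance, so the convex combination $(1-\delta_n)w_n+\delta_n Fw_n$ lies in $M_0$ by convexity; applying $F$ keeps it in $M_0$, and the final convex combination $w_{n+1}=(1-\eta_n)w_n+\eta_n F\big((1-\delta_n)w_n+\delta_n Fw_n\big)$ again lies in $M_0$. Since $w_0\in M_0$ by hypothesis, the induction closes and $w_n\in M_0$, hence $d(w_n,M_0)=0$, for every $n$.

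Consequently the hypothesis $d(w_n,M_0)\to 0$ of Theorem \ref{l12} holds, and both conclusions, namely $\lim_{n\to\infty}\|w_n-Fw_n\|=0$ and, under the additional assumption that $F(M)$ lies in a compact set, the convergence of $\{w_n\}$ to a fixed point of $F$, follow at once from that theorem. The only genuine content is the invariance of $M_0$ under $F$ and under convex combinations; since starting in $M_0$ is assumed outright, I do not expect any serious obstacle beyond these two short verifications.
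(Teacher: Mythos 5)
Your proposal is correct and follows exactly the route the paper intends: the corollary is meant to be an immediate consequence of Theorem \ref{l12}, with the hypothesis $d(w_n,M_0)\to 0$ discharged because the iterates never leave $M_0$. The paper omits the verification entirely, whereas you correctly supply the two facts that make it work ($F(M_0)\subseteq M_0$ and convexity of $M_0$), so your write-up is a valid and slightly more complete version of the same argument.
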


\begin{corollary}
	Let $M$ and $N$ be nonempty bounded closed convex subsets of a Hilbert Space and Let $F$ be as in Theorem \ref{l2}.
	Let $w_0\in M_0,$ and define $w_{n+1}=P^n \big( (1-\eta_n)w_n + \eta_n Fz_n \big) ,\, $ where $  z_n = (1-\delta_n)w_n+\delta_n Fw_n,\, \eta_n, \delta_n \in (\epsilon, 1-\epsilon),$ 
	where $\epsilon\in (0,1/2)$ and $n=0,1,2,...$ then $lim_{n \rightarrow \infty}\parallel w_n-Fw_n \parallel = 0.$ Moreover, if $F(M)$ is mapped into a  compact subset of $N$, then $\{w_n\}$ converges to a fixed point of $F$.
\end{corollary}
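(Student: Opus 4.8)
The plan is to run the argument of Theorem~\ref{l12} almost verbatim, using Lemma~\ref{l11} to absorb the von Neumann operator $P^n$, and to recover the property ``$d(w_n,M_0)\to0$'' (a standing hypothesis there) from the Hilbert-space geometry. As in Theorem~\ref{l12}, I would first dispose of the degenerate case $d(M,N)=0$, where $M_0=N_0=M\cap N$, where $F$ restricts to a nonexpansive self-map of $M\cap N$, and where $P$ fixes $M\cap N$ pointwise, so the conclusion drops out of Theorem~\ref{l3}. Assume then $d(M,N)>0$.

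For the first assertion, by Theorem~\ref{l2} fix $z\in N_0$ with $Fz=z$, and set $u_n=(1-\eta_n)w_n+\eta_n Fz_n$, so that $w_{n+1}=P^n(u_n)$. Since $z\in M_0\cup N_0$, iterating Lemma~\ref{l11} gives $\parallel w_{n+1}-z\parallel=\parallel P^n(u_n)-z\parallel\le\parallel u_n-z\parallel$, and applying the relative nonexpansiveness of $F$ twice (once to $Fz_n-Fz$ and once inside $z_n$, exactly the opening display of Theorem~\ref{l12}) gives $\parallel u_n-z\parallel\le\parallel w_n-z\parallel$. Hence $\parallel w_n-z\parallel\downarrow d$ for some $d\ge d(M,N)>0$. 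If $\parallel w_n-Fw_n\parallel\not\to0$, I would choose a subsequence with $\parallel w_{n_k}-Fw_{n_k}\parallel\ge\epsilon$; Proposition~\ref{l5}, applied to the convex combination $(1-\delta_{n_k})(z-w_{n_k})+\delta_{n_k}(z-Fw_{n_k})$, bounds $\parallel z-z_{n_k}\parallel$ and hence, through the same chain, $\parallel u_{n_k}-z\parallel\le(1-l\,\delta(\frac{\epsilon}{d+\xi}))(d+\xi)<d$ for small $\xi$. This contradicts $\parallel w_{n_k+1}-z\parallel\le\parallel u_{n_k}-z\parallel$ together with $\parallel w_n-z\parallel\downarrow d$, so $\parallel w_n-Fw_n\parallel\to0$.

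The step that replaces the hypothesis $d(w_n,M_0)\to0$, and which I expect to be the crux, is that from $w_0\in M_0$ the whole iteration stays in $M_0$. I would record three facts valid in a Hilbert space: $M_0$ is convex; $F(M_0)\subseteq M_0$ (if $w\in M_0$ has partner $q\in N_0$, then $\parallel Fw-Fq\parallel\le\parallel w-q\parallel=d(M,N)$ with $Fw\in M$ and $Fq\in N$ forces $\parallel Fw-Fq\parallel=d(M,N)$); and $P$ fixes $M_0$ pointwise, since for $x\in M_0$ one has $P_N(x)\in N_0$ and then $x$ is the unique nearest point of $M$ to $P_N(x)$, so $P_M(P_N(x))=x$. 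An induction using these three facts shows $z_n,u_n\in M_0$ and hence $w_{n+1}=P^n(u_n)=u_n\in M_0$; in particular $d(w_n,N)=d(M,N)$ for every $n$.

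For convergence, the compactness hypothesis yields a subsequence $Fw_{n_k}\to a$, and $\parallel w_n-Fw_n\parallel\to0$ gives $w_{n_k}\to a$; continuity of $d(\cdot,N)$ with $d(w_{n_k},N)=d(M,N)$ then gives $a\in M_0$. Now I would copy the endgame of Theorem~\ref{l12}: take $b=P_N(a)\in N_0$, so $\parallel a-b\parallel=d(M,N)$; from $\parallel w_{n_k}-b\parallel\to\parallel a-b\parallel$ and $\parallel w_{n_k}-b\parallel\ge\parallel Fw_{n_k}-Fb\parallel\to\parallel a-Fb\parallel$ I get $\parallel a-Fb\parallel=d(M,N)$, whence strict convexity forces $Fb=b$ and then $Fa=a$. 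Finally $b\in N_0$ and Lemma~\ref{l11} make $\{\parallel w_n-b\parallel\}$ nonincreasing, while $\parallel w_{n_k}-b\parallel\to d(M,N)$, so $\parallel w_n-b\parallel\to d(M,N)$ and Corollary~\ref{l9} gives $w_n\to P_M(b)=a$, a fixed point of $F$. The only genuinely new ingredient beyond Theorem~\ref{l12} is the $M_0$-invariance of the previous paragraph, and that is precisely where strict convexity of the Hilbert norm and the projection $P$ enter.
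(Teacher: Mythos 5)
Your proposal is correct and follows essentially the same route as the paper, whose entire proof is the one-line observation that $P^n\big((1-\eta_n)w_n+\eta_n Fz_n\big)=(1-\eta_n)w_n+\eta_n Fz_n$ followed by an appeal to Theorem~\ref{l12}. Your middle paragraph (convexity of $M_0$, $F(M_0)\subseteq M_0$, and $P$ fixing $M_0$ pointwise, hence the iterates never leave $M_0$) is exactly the justification the paper leaves implicit, and the rest of your argument is the proof of Theorem~\ref{l12} rerun with $d(w_n,M_0)=0$.
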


\begin{proof}
	One can note that $P^n \big( (1-\eta_n)w_n + \eta_n Fz_n \big) = (1-\eta_n)w_n + \eta_n Fz_n,$ by Theorem \ref{l12} the result follows. 
\end{proof}

\begin{example}
	Let $X = \mathbb{R}^2,$
	\begin{eqnarray*}
		M = \{ (w,0) : -4 \leq w\leq -3 \} \, \text{and} \, N = \{ (w,0) : 3 \leq w\leq 4 \}.
	\end{eqnarray*}
	Define
	\begin{eqnarray*}
		F : M \rightarrow M \, \text{by} \, F(w,0) = \Big( \frac{w-3}{2} , 0\Big),\\
		F : N \rightarrow N \, \text{by} \, F(w,0) = \Big( \frac{w+3}{2} , 0\Big).
	\end{eqnarray*}
	Let $(w,0) \in M, (w',0) \in N.$ Then, 
	\begin{eqnarray*}
		\parallel F(w,0) - F(w',0) \parallel & = & \parallel \Big( \frac{w-3}{2} , 0\Big) - \Big( \frac{w'+3}{2} , 0\Big) \parallel\\
		& = & \parallel \Big( \frac{w-w'-6}{2} , 0 \Big) \parallel
	\end{eqnarray*}
	\begin{eqnarray*}
		& = & \sqrt{\Big( \frac{w-w'-6}{2} \Big)^2 + 0}\\
		& \leq & \sqrt{( w-w')^2}.
	\end{eqnarray*}
	Hence $F$ is a relatively non expansive mapping.\\
	Let $w_0 = -3.5$ and set $w_{n+1} = (1-\eta_n) w_n + \eta_n F\big( (1-\delta_n) w_n + \delta_n Fw_n \big)$ with $\eta_n = \delta_n = 0.999.$ We have, $Fw= \frac{w-3}{2}$.
	Then $w_{n+1} = 0.25099975 w_n - 2.24700075.$\\ 
	In Picard iteration we have $w_{n+1} = Fw_n = \frac{w_n-3}{2}$, and Mann with $\eta_n = 0.999$ or Krasnoselskij iteration , we have $w_{n+1} = (1-\eta_n) w_n + \eta_n Fw_n = 0.5005 w_n - 1.4985$. Using Matlab coding we give the comparison table for approaching fixed point in these three iteration process. 
%	\begin{center}
% 		\begin{tabular}{ | c | c | c | c |}
% 			\hline
% 			$n$ & \textbf{Picard Iteration} &  \textbf{Mann Iteration} &  \textbf{Ishikawa Iteration}\\
% 			\hline
% 			22 & -3.000000476837159 & -3.000000194975385 & -3.000000000000062\\
% 			23 & -3.000000119209290 & -3.000000097585180 & -3.000000000000016\\
% 			24 & -3.000000059604645 & -3.000000048841383 & -3.000000000000004\\
% 			25 & -3.000000029802322 & -3.000000024445112 & -3.000000000000001\\
% 			\textbf{26} & -3.000000014901161 & -3.000000012234779 & \textbf{-3.000000000000000}\\
% 			\vdots  &      \vdots        &       \vdots       &   \vdots     \\
% 			49 & -3.000000000000002 & -3.000000000000002 &   \\
% 			50 & -3.000000000000001 & -3.000000000000001 &   \\
% 			\textbf{51} & \textbf{-3.000000000000000} & \textbf{-3.000000000000000} &   \\
% 			\hline
% 		\end{tabular}
% 	\end{center}
\end{example}
Comparison of Ishikawa iteration with Mann and Picard iteration is given in the table.
%\begin{figure}[hbt!]
% 	\centering \par
% 	\includegraphics[width=3in]{}
% \end{figure}

The figure shows comparison of Ishikawa iteration with Mann and Picard iteration by using the continuous data points from -3.5 to -3.

In the next result, we provide a stronger version to iterate the fixed point via von Neumann sequences.

\begin{theorem}\label{l13}
	Let $M$ and $N$ be nonempty bounded closed convex subsets of a Hilbert Space and suppose $F: M\cup N \rightarrow M\cup N$ satisfies
	\begin{enumerate}
		\item $F(M) \subseteq M$ and $F(N) \subseteq N ;$ and
		\item $\parallel Fw-Fz \parallel \leq \parallel w-z \parallel $ for $w\in M, z\in N.$
	\end{enumerate}
	Let $w_0\in M,$ and define $w_{n+1}=P^n \big( (1-\eta_n)w_n + \eta_n Fz_n \big) ,\, $ where $  z_n = (1-\delta_n)w_n+\delta_n Fw_n,\, \eta_n, \delta_n \in (\epsilon, 1-\epsilon),$ 
	where $\epsilon\in (0,1/2)$ and $n=0,1,2,...$, then $lim_{n \rightarrow \infty}\parallel w_n-Fw_n \parallel = 0.$ Moreover, if $F(M)$ lies in a compact set and $\parallel w_n - F z_n \parallel \rightarrow 0$, then $\{w_n\}$ converges to a fixed point of $F$.
\end{theorem}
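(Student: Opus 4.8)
The plan is to follow the architecture of Theorem \ref{l12}, replacing the hypothesis $d(w_n,M_0)\to 0$ by the geometry of the alternating projection $P=P_M\circ P_N$. First I would dispose of the degenerate case $d(M,N)=0$ exactly as in Theorem \ref{l12}: there $M_0=N_0=M\cap N$, $F$ restricts to a nonexpansive self-map of $M\cap N$, $P$ fixes $M\cap N$ pointwise, and the conclusion reduces to Theorem \ref{l3}. So assume $d(M,N)>0$ and, by Theorem \ref{l2}, fix a fixed point $z\in N_0$ with $Fz=z$ (Theorem \ref{l2} simultaneously provides a paired fixed point in $M_0$, so both sets are nonempty). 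Note that every iterate lies in $M$, since $P$ maps into $M$ and $w_0\in M$, so every application of relative nonexpansiveness below pairs a point of $M$ against $z\in N$ and is legitimate.

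For the asymptotic regularity $\parallel w_n-Fw_n\parallel\to 0$, the one new ingredient beyond Theorem \ref{l12} is that the projection step does not increase the distance to $z$. Writing $u_n=(1-\eta_n)w_n+\eta_n Fz_n$, telescoping Lemma \ref{l11} with the anchor $z\in N_0$ gives $\parallel w_{n+1}-z\parallel=\parallel P^n(u_n)-z\parallel\le\parallel u_n-z\parallel$, and the convexity plus relative-nonexpansiveness computation of Theorem \ref{l12} gives $\parallel u_n-z\parallel\le\parallel w_n-z\parallel$. Hence $\{\parallel w_n-z\parallel\}$ is nonincreasing and converges to some $d\ge 0$. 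If $\parallel w_n-Fw_n\parallel$ does not converge to $0$, I would pass to a subsequence with $\parallel w_{n_k}-Fw_{n_k}\parallel\ge\epsilon$, choose $\xi>0$ small, and run the uniform-convexity estimate of Theorem \ref{l12} (Proposition \ref{l5}) on the convex combination defining $u_{n_k}$; combined with $\parallel w_{n_k+1}-z\parallel\le\parallel u_{n_k}-z\parallel$ this yields $\parallel w_{n_k+1}-z\parallel\le\big(1-l\,\delta(\tfrac{\epsilon}{d+\xi})\big)(d+\xi)<d$, contradicting $\parallel w_n-z\parallel\to d$. This step is verbatim Theorem \ref{l12} except for the extra first inequality supplied by Lemma \ref{l11}.

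For the convergence statement the hypotheses are that $F(M)$ is relatively compact and $\parallel w_n-Fz_n\parallel\to 0$. Since $\parallel w_n-Fw_n\parallel\to 0$ and $\{Fw_n\}\subseteq F(M)$, some subsequence satisfies $Fw_{n_j}\to a'\in M$ and then $w_{n_j}\to a'$; moreover $\parallel u_n-w_n\parallel=\eta_n\parallel Fz_n-w_n\parallel\to 0$, so $u_{n_j}\to a'$ as well. The crux is to produce a norm-cluster point of $\{w_n\}$ lying in $M_0$. For this I would invoke the classical fact that for the alternating projection the gaps shrink to the minimal distance, $d(P^m(u),N)\downarrow d(M,N)$ as $m\to\infty$ for every fixed $u$ (von Neumann \cite{8}). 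Fixing $m$ and using monotonicity of the gap in the exponent together with continuity of the finite power $P^m$, one gets $\limsup_j d(w_{n_j+1},N)=\limsup_j d(P^{n_j}(u_{n_j}),N)\le\lim_j d(P^m(u_{n_j}),N)=d(P^m(a'),N)$; letting $m\to\infty$ forces $d(w_{n_j+1},N)\to d(M,N)$. Since $\parallel w_{n_j+1}-Fw_{n_j+1}\parallel\to 0$ and $F(M)$ is relatively compact, a further subsequence gives $w_{n_{j_i}+1}\to c$ with $d(c,N)=d(M,N)$, i.e. $c\in M_0$. Transferring the gap bound, which lives naturally on $w_{n+1}=P^n(u_n)$, to an actual limit point is exactly where the assumption $\parallel w_n-Fz_n\parallel\to 0$ is needed (it aligns the cluster points of $\{u_n\}$ with those of $\{w_n\}$), and this is the main obstacle of the proof.

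It remains to identify $c$ as a fixed point and to upgrade to full convergence. Choosing $b\in N_0$ with $\parallel c-b\parallel=d(M,N)$ and running the strict-convexity argument of Theorem \ref{l12} along $w_{n_{j_i}+1}\to c$ (using $Fw_{n_{j_i}+1}\to c$) gives first $Fb=b$ and then $Fc=c$, each by uniqueness of nearest points. Finally, since $b\in N_0$, the first part shows $\{\parallel w_n-b\parallel\}$ is nonincreasing; as $\parallel w_{n_{j_i}+1}-b\parallel\to\parallel c-b\parallel=d(M,N)$, the whole sequence satisfies $\parallel w_n-b\parallel\to d(M,N)$, whence Corollary \ref{l9} yields $w_n\to P_M(b)=c$, a fixed point of $F$.
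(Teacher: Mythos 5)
Your proof is correct, and its first half (the asymptotic regularity $\parallel w_n-Fw_n\parallel\to 0$) coincides with the paper's: the same Fej\'er monotonicity toward a fixed point $z\in N_0$ obtained from Lemma \ref{l11} together with the convexity computation of Theorem \ref{l12}, followed by the same uniform-convexity contradiction via Proposition \ref{l5}. The second half is genuinely different. The paper identifies the limit as the von Neumann limit $u(v_0)$ of the orbit of the cluster point $v_0$: it shows $F$ fixes $P^n(v_0)$ and $P_N(P^n(v_0))$ for every $n$, invokes Theorem \ref{l6} to get $P^n(w)\to u(w)\in M_0$, proves $F(u(v_0))=u(v_0)$, and then combines continuity of $u$, the monotonicity $g_{n+1}\le g_n$ of $g_n(w)=\parallel P^n(w)-u(w)\parallel$ from Lemma \ref{l11}, and Dini's theorem on the compact set $\{(1-\eta_{n_k})w_{n_k}+\eta_{n_k}Fz_{n_k}\}\cup\{v_0\}$ to conclude $w_{n_k+1}\to u(v_0)=v_0$. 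You instead show that a norm cluster point $c$ of $\{w_{n+1}\}$ already lies in $M_0$, using monotonicity of the gap $m\mapsto d(P^m(\cdot),N)$ plus its convergence to $d(M,N)$, then reuse the strict-convexity transfer from Theorem \ref{l12} to get $Fb=b$ and $Fc=c$, and finish with Fej\'er monotonicity with respect to $b$ and Corollary \ref{l9}. Your route buys two things: it needs only convergence of the alternating-projection gap to $d(M,N)$, which holds unconditionally for closed convex sets in Hilbert space, rather than norm convergence of the full von Neumann sequence (the paper's appeal to Theorem \ref{l6} tacitly presupposes bounded regularity, which is not among the hypotheses), and it avoids the paper's inequality $\parallel P^{n_k}(x)-w_{n_k}\parallel\le\parallel x-w_{n_k}\parallel$, which as written would require $w_{n_k}$ to be fixed by $P^{n_k}$. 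Two small repairs: the gap convergence should be attributed to Cheney--Goldstein or to Bauschke--Borwein \cite{4} (the difference vectors converge in norm to the displacement vector), not to von Neumann \cite{8}, whose theorem concerns subspaces; and your reduction of the case $d(M,N)=0$ to Theorem \ref{l3} shares with the paper the unproved assertion that the iterates remain in $M\cap N$.
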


\begin{proof}
	If $d(M,N) = 0,$ then $M_0 = N_0 = M\cap N$ and $F: M\cap N \rightarrow M\cap N$ is non expansive with $w_{n+1}=P^n \big( (1-\eta_n)w_n + \eta_n F \big((1-\delta_n)w_n+\delta_n Fw_n \big) \big) = (1-\eta_n)w_n + \eta_n F \big((1-\delta_n)w_n+\delta_n Fw_n \big),$ the usual Ishikawa iteration. So let us take that $d(M,N)>0.$ 
	By Theorem \ref{l2},  we can find $z\in N_0$ such that $Fz = z.$ Now,
	\begin{eqnarray*}
		\parallel w_{n+1}- z\parallel & = & \parallel P^n \big( (1-\eta_n)w_n + \eta_n F\big((1-\delta_n)w_n+\delta_n Fw_n\big) \big)-z\parallel \\
		& \leq & \parallel (1-\eta_n)w_n + \eta_n F\big((1-\delta_n)w_n+\delta_n Fw_n\big) -z\parallel \\
		& = & \parallel (1-\eta_n)w_n + \eta_n F\big((1-\delta_n)w_n+\delta_n Fw_n\big)-\big((1-\eta_n)z + \eta_n z \big) \parallel \\
		& \leq & (1-\eta_n) \parallel w_n-z \parallel + \eta_n \parallel F\big((1-\delta_n)w_n + \delta_n Fw_n\big)-Fz \parallel \\
		& \leq & (1-\eta_n) \parallel w_n-z \parallel + \eta_n \parallel (1-\delta_n)w_n + \delta_n Fw_n-z \parallel \\
		& = & (1-\eta_n) \parallel w_n-z \parallel \\&& + \eta_n \parallel (1-\delta_n)w_n + \delta_n Fw_n-\big( (1-\delta_n)z + \delta_n z \big) \parallel \\
		& \leq & (1-\eta_n) \parallel w_n-z \parallel + \eta_n \big( \parallel (1-\delta_n)(w_n-z) \parallel + \delta_n \parallel Fw_n- Fz \parallel \big) \\
		& \leq & \parallel w_n-z \parallel.
	\end{eqnarray*}
	This implies that the sequence $\{ \parallel w_n-z \parallel \} $ is non increasing. Then there exists $d>0$ such that $\lim_{n \rightarrow \infty} \parallel w_n-z \parallel = d.$ \\
	Suppose there exists a subsequence $\{ w_{n_k} \}$ of $\{ w_n \}$ and an $\epsilon>0$ such that $\parallel w_{n_k} - Fw_{n_k} \parallel \geq \epsilon > 0$ for all $k$.\\
	Since the modulus of convexity of $\delta$ of $X$ is continuous and increasing function, we choose $\xi > 0$ so small that $\Big( 1- c\delta \big(\frac{\epsilon}{d+\xi} \big) \Big)(d+\xi)< d,$ where $c > 0.$\\
	Choose $k$, such that $\parallel w_{n_k} - z \parallel \leq d+ \xi.$ By using Proposition \ref{l5},
	\begin{eqnarray*}
		\parallel w_{n_k + 1} - z \parallel & = & \parallel P^{n_k} \big( (1-\eta_{n_k})w_{n_k} + \eta_{n_k} F\big((1-\delta_{n_k})w_{n_k} +\delta_{n_k} Fw_{n_k}\big) \big) - z \parallel \\
		& \leq & \parallel \big( (1-\eta_{n_k})w_{n_k} + \eta_{n_k} F\big((1-\delta_{n_k})w_{n_k} +\delta_{n_k} Fw_{n_k}\big) - z \parallel \\
		& = & \parallel (1-\eta_{n_k})z + \eta_{n_k}z \\&& - \big( (1-\eta_{n_k})w_{n_k} + \eta_{n_k} F\big((1-\delta_{n_k})w_{n_k} +\delta_{n_k} Fw_{n_k}\big) \big) \parallel \\
		& \leq & (1-\eta_{n_k}) \parallel z - w_{n_k} \parallel + \eta_{n_k} \parallel Fz - F\big((1-\delta_{n_k})w_{n_k} +\delta_{n_k} Fw_{n_k}\big) \parallel \\
		& \leq & (1-\eta_{n_k}) (d+ \xi) + \eta_{n_k} \parallel z - \big((1-\delta_{n_k})w_{n_k} +\delta_{n_k} Fw_{n_k}\big) \parallel \\
		& = & (1-\eta_{n_k}) (d+ \xi) + \eta_{n_k} \parallel (1-\delta_{n_k})(z - w_{n_k}) +\delta_{n_k} (z-Fw_{n_k}) \parallel \\
		& \leq & (1-\eta_{n_k}) (d+ \xi) + \eta_{n_k} \Big( 1-2 \delta \Big(\frac{\epsilon}{d+\xi} \Big) \text{min} \{ \delta_{n_k}, 1-\delta_{n_k} \} \Big) (d+\xi)\\
		& = & \Big(1-\eta_{n_k} + \eta_{n_k} - 2 \eta_{n_k} \delta \Big(\frac{\epsilon}{d+\xi} \Big) \text{min} \{ \delta_{n_k}, 1-\delta_{n_k} \} \Big) (d+\xi)\\
		& = & \Big(1 - 2 \delta \Big(\frac{\epsilon}{d+\xi} \Big) \text{min} \{ \eta_{n_k} \delta_{n_k}, \eta_{n_k}(1-\delta_{n_k}) \} \Big) (d+\xi).
	\end{eqnarray*}
	Since we can find $l > 0$ such that $2 \,\text{min} \{ \eta_{n_k} \delta_{n_k}, \eta_{n_k}(1-\delta_{n_k}) \} \geq l,$
	\begin{eqnarray*}
		\Big(1 - 2 \delta \Big(\frac{\epsilon}{d+\xi} \Big)& \text{min} \{ \eta_{n_k} \delta_{n_k},\eta_{n_k}(1-\delta_{n_k}) \} \Big)(d+\xi)\\ & \leq \Big( 1- l\delta \Big(\frac{\epsilon}{d+\xi} \Big)\Big)(d +\xi).
	\end{eqnarray*}
	If we choose very small $\xi > 0$, we obtain $\Big( 1- l\delta \Big(\frac{\epsilon}{d+\xi} \Big)\Big)(d +\xi) < d$, a contradiction. This proves that $lim_{n \rightarrow \infty}\parallel w_n-Fw_n \parallel = 0.$  \\
	Since $F(M)$ is contained in a compact set, $\{Fw_n \}$ has a subsequence $\{ Fw_{n_k} \}$ that converges to a point $v_0 \in M.$ Also $\{ w_{n_k} \}$ converges to $v_0$. From the given sequence, we obtain
	\begin{eqnarray*}
		\parallel w_{n_k+1} - w_{n_k} \parallel &=& \parallel P^{n_k} \big((1-\eta_{n_k}) w_{n_k} + \eta_{n_k} F z_{n_k}\big) - w_{n_k} \parallel\\ 
		&\leq& \parallel (1-\eta_{n_k}) w_{n_k} + \eta_{n_k} F z_{n_k} - w_{n_k} \parallel \\
		&=& \eta_{n_k} \parallel w_{n_k} - F z_{n_k} \parallel.
	\end{eqnarray*}
	Since $\parallel F z_{n_k} - w_{n_k} \parallel \rightarrow 0,$ which implies 
	that $\parallel w_{n_k+1} - w_{n_k} \parallel \rightarrow 0.$ Therefore, $ w_{n_k+1} \rightarrow v_0,$ which implies that $w_n \rightarrow v_0.$ Also we have $F z_{n_k} \rightarrow v_0$ as $k \rightarrow \infty.$\\
	Now, $\parallel Fw_{n_k} - F(P_N(v_0)) \parallel \leq \parallel w_{n_k} - P_N(v_0) \parallel$ which gives that \\ 
	$\parallel v_0 - F(P_N(v_0)) \parallel \leq \parallel v_0 - P_N(v_0) \parallel.$  Therefore, $F(P_N(v_0)) = P_N(v_0)$.\\
	Also, $ \parallel F(P(v_0)) - P_N(v_0) \parallel = \parallel F(P(v_0)) - F(P_N(v_0)) \parallel \leq \parallel P(v_0) - P_N(v_0) \parallel.$ So $F(P(v_0)) = P(v_0)$.\\
	Now, $ \parallel FP_N(P(v_0)) - P(v_0) \parallel = \parallel FP_N(P(v_0)) - F(P(v_0)) \parallel \leq \parallel P_N(P(v_0)) - P(v_0) \parallel.$ Thus $FP_N(P(v_0)) = P_N(P(v_0))$.\\ 
	For any $n, F(P^n(v_0)) = P^n(v_0)$ and $FP_N(P^n(v_0)) = P_N(P^n(v_0)).$ By Theorem \ref{l6}, for each $w\in M$ the sequence $\{ P^n(w) \}$ converges to some $u(w) \in M_0.$ Now,
	\begin{eqnarray*}
		\parallel F(u(v_0)) -P_N(u(v_0)) \parallel & \leq & \lim_{n \rightarrow \infty} \parallel F(u(v_0)) -P_N(P^n(v_0)) \parallel\\
		& = & \lim_{n \rightarrow \infty} \parallel F(u(v_0)) -F(P_N(P^n(v_0))) \parallel\\
		& \leq & \lim_{n \rightarrow \infty} \parallel u(v_0) -P_N(P^n(v_0)) \parallel\\
		& = & \parallel u(v_0) -P_N(u(v_0)) \parallel.
	\end{eqnarray*}
	So $\parallel F(u(v_0)) -P_N(u(v_0)) \parallel \leq \parallel u(v_0) -P_N(u(v_0)) \parallel.$\\
	Therefore $F(u(v_0)) = u(v_0)$ and similarly $FP_N(u(v_0)) = P_N(u(v_0)).$\\
	Now we define $g_n : M \rightarrow \mathbb{R}$ by $g_n(w) = \parallel P^n(w) - u(w) \parallel.$\\ Since $\parallel u(w) - u(z) \parallel = \lim_{n \rightarrow \infty} \parallel P^n(w) - P^n (z) \parallel \leq \parallel w - z \parallel,$ then we conclude that $u$ is continuous. Therefore $g_n(w)$ is continuous and converges pointwise to zero. Since $u(w) \in M_0,$ by Lemma \ref{l11}, we obtain $g_{n+1} \leq g_n.$ Therefore $g_n$ converges uniformly on the compact set $$S = \{(1-\eta_{n_k})w_{n_k} + \eta_{n_k} Fz_{n_k}\} \cup \{v_0\}.$$ Therefore $$\lim_{k \rightarrow \infty} \parallel P^{n_k} ((1-\eta_{n_k})w_{n_k} + \eta_{n_k} Fz_{n_k}) - u((1-\eta_{n_k})w_{n_k} + \eta_{n_k} Fz_{n_k}) \parallel = 0.$$ Since $u((1-\eta_{n_k})w_{n_k} + \eta_{n_k} Fz_{n_k}) \rightarrow u(v_0),$ we get $w_{n_k+1} \rightarrow u(v_0),$ which gives that $u(v_0) = v_0.$ 
	Therefore $ Fv_0 = F(u(v_0)) = u(v_0) = v_0$ , which completes the proof.
\end{proof}

Suppose $X$ is a Hilbert space and let $F$ be as in Theorem \ref{l1}. 
Consider $P_M F : M \rightarrow M$ and $P_N F : N \rightarrow N$. 
From the Proposition \ref{l10}, $\parallel P_M F(w) - P_N F(z) \parallel \leq \parallel w-z \parallel$ for $w \in M$ and $z \in N,$ 
by Theorem \ref{l12} and Theorem \ref{l13} we give the following results on convergence of best proximity points.

\begin{corollary} \label{l14}
	Let $M$ and $N$ be nonempty, closed, bounded and convex subsets of a Hilbert space $X$. 
	Let $F$ be as in Theorem \ref{l1}. If $F(M)$ is mapped into a compact subset of $N$, 
	then for any $w_0 \in M_0$ the sequence defined by $w_{n+1} = (1-\eta_n) w_n + \eta_n P_M\big(F((1-\delta_n)w_n+\delta_n P_MFw_n)\big)$ converges 
	to $w$ in $M_0$ such that $\parallel w-Fw \parallel = d(M,N).$
\end{corollary}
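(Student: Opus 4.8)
The plan is to reduce the statement to Theorem \ref{l12} by passing to the auxiliary self-compatible map built from the projections. Since $F$ is as in Theorem \ref{l1}, we have $F(M)\subseteq N$ and $F(N)\subseteq M$, so I would define $G:M\cup N\to M\cup N$ by $Gw=P_MFw$ for $w\in M$ and $Gz=P_NFz$ for $z\in N$; these are well defined and single valued because $X$ is Hilbert and $M,N$ are closed convex. Then $G(M)\subseteq M$ and $G(N)\subseteq N$ trivially. Moreover, by Proposition \ref{l10} (exactly as noted in the paragraph preceding the corollary) $\parallel P_MFw-P_NFz\parallel\leq\parallel w-z\parallel$ for $w\in M$, $z\in N$, so $\parallel Gw-Gz\parallel\leq\parallel w-z\parallel$ and $G$ is relatively nonexpansive. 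Finally, since $P_MFw_n=Gw_n$ and each inner iterate $(1-\delta_n)w_n+\delta_n Gw_n$ lies in the convex set $M$, the recursion in the statement is precisely $w_{n+1}=(1-\eta_n)w_n+\eta_n G\big((1-\delta_n)w_n+\delta_n Gw_n\big)$, i.e. the Ishikawa iteration for $G$. So the whole task becomes: apply Theorem \ref{l12} to $G$, then translate a fixed point of $G$ into a best proximity point of $F$.

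Next I would check the two hypotheses of Theorem \ref{l12} for $G$. For the compactness clause, $F(M)$ lies in a compact set $K\subseteq N$ by assumption and $P_M$ is nonexpansive, hence continuous, so $G(M)=P_M(F(M))\subseteq P_M(K)$, which is compact. For the clause $d(w_n,M_0)\to0$ I would instead prove the stronger fact that $w_n\in M_0$ for every $n$, which needs two ingredients. First, $M_0$ is convex: it equals the sublevel set $\{w\in M:d(w,N)\leq d(M,N)\}$ of the convex function $w\mapsto d(w,N)$ (since always $d(w,N)\geq d(M,N)$ on $M$). Second, $G(M_0)\subseteq M_0$: take $w\in M_0$ with a partner $z'\in N$ satisfying $\parallel w-z'\parallel=d(M,N)$; then $Fw\in N$, $Fz'\in M$, and the chain $d(M,N)\leq d(Fw,M)\leq\parallel Fw-Fz'\parallel\leq\parallel w-z'\parallel=d(M,N)$ forces $d(Fw,M)=d(M,N)$, whence $\parallel Gw-Fw\parallel=\parallel P_MFw-Fw\parallel=d(Fw,M)=d(M,N)$ and $Gw\in M_0$. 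Starting from $w_0\in M_0$ and using that convex combinations and applications of $G$ both preserve $M_0$, induction gives $w_n\in M_0$, so $d(w_n,M_0)=0$.

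With both hypotheses verified, Theorem \ref{l12} applied to $G$ yields $\parallel w_n-Gw_n\parallel\to0$ and convergence of $\{w_n\}$ to a fixed point $w$ of $G$. Since $M_0$ is closed (preimage of $\{d(M,N)\}$ under the continuous map $d(\cdot,N)$) and every $w_n\in M_0$, the limit satisfies $w\in M_0$. The final step is the translation: $Gw=w$ means $P_MFw=w$, so $\parallel w-Fw\parallel=\parallel P_MFw-Fw\parallel=d(Fw,M)$, and the same equality chain used for the invariance (now available because $w\in M_0$) gives $d(Fw,M)=d(M,N)$. Hence $\parallel w-Fw\parallel=d(M,N)$, so $w$ is the desired best proximity point in $M_0$.

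The reduction to $G$ and the compactness check are routine; the step I expect to require the most care is the $M_0$ bookkeeping, namely that $G$ maps $M_0$ into $M_0$ and that a fixed point of $G$ in $M_0$ is automatically a best proximity point of $F$. Both rest on the single observation that relative nonexpansiveness squeezes $\parallel Fw-Fz'\parallel$ to equal $d(M,N)$ for a proximal pair $(w,z')$. I note also that no separate treatment of the degenerate case $d(M,N)=0$ is needed here, since Theorem \ref{l12} already incorporates it and the translation argument collapses correctly ($Gw=w$ then forces $Fw=w$).
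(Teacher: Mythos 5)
Your proposal is correct and follows essentially the same route the paper intends: it reduces the corollary to Theorem \ref{l12} by passing to the auxiliary relatively nonexpansive self-maps $P_MF$ on $M$ and $P_NF$ on $N$, exactly as indicated in the paragraph preceding the corollary via Proposition \ref{l10}. The paper leaves the details implicit, whereas you correctly supply the missing bookkeeping (invariance of $M_0$ under $G=P_MF$ and under convex combinations, and the translation of a fixed point of $G$ in $M_0$ into a best proximity point of $F$), all of which checks out.
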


\begin{corollary} 
	Let $M$ and $N$ be nonempty, closed, bounded and convex subsets of a Hilbert space $X$. 
	Let $F$ be as in Theorem \ref{l1}. If $F(M)$ is mapped into a compact subset of $N$, 
	then for any $w_0 \in M$ the sequence defined by $w_{n+1} = (1-\eta_n) w_n + \eta_n P_M\big(F((1-\delta_n)w_n+\delta_n P_MFw_n)\big)$ converges 
	to $w$ in $M_0$ such that $\parallel w-Fw \parallel = d(M,N),$ provided $d(w_n, M_0) \rightarrow 0.$
\end{corollary}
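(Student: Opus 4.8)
The plan is to reduce this statement to Theorem~\ref{l12} by replacing $F$ with the modified self-map that the iteration actually uses. I would define $G : M \cup N \to M \cup N$ by $Gw = P_M(Fw)$ for $w \in M$ and $Gz = P_N(Fz)$ for $z \in N$. Since $F$ is as in Theorem~\ref{l1} we have $F(M) \subseteq N$ and $F(N) \subseteq M$, and Proposition~\ref{l10} gives $P_M(N) \subseteq M$ and $P_N(M) \subseteq N$; hence $G(M) \subseteq M$ and $G(N) \subseteq N$, so $G$ satisfies hypothesis (1) of Theorem~\ref{l12}. For hypothesis (2), take $w \in M$ and $z \in N$: then $Fw \in N$ and $Fz \in M$, and applying the inequality of Proposition~\ref{l10} to the pair $(Fz,Fw) \in M \times N$ yields $\|Gw - Gz\| = \|P_M(Fw) - P_N(Fz)\| \le \|Fz - Fw\| \le \|w - z\|$, the last step by relative nonexpansiveness of $F$. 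Thus $G$ is relatively nonexpansive.

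Next I would observe that the given recursion is exactly the Ishikawa iteration for $G$. Indeed, $y_n := (1-\delta_n)w_n + \delta_n P_M F w_n = (1-\delta_n)w_n + \delta_n G w_n$ lies in $M$ by convexity, so $P_M(F y_n) = G y_n$ and the recursion reads $w_{n+1} = (1-\eta_n)w_n + \eta_n G\big((1-\delta_n)w_n + \delta_n G w_n\big)$, with $w_n \in M$ for all $n$ by induction. Moreover $G(M) = P_M(F(M))$ is the image under the nonexpansive (hence continuous) map $P_M$ of a set contained in a compact subset of $N$, so $G(M)$ lies in a compact subset of $M$. Therefore all hypotheses of Theorem~\ref{l12} hold for $G$, and since $d(w_n, M_0) \to 0$ is assumed, that theorem gives $\|w_n - G w_n\| \to 0$ and $w_n \to w$ for some $w \in M_0$ with $Gw = w$, i.e.\ $w = P_M(Fw)$.

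It remains to upgrade this fixed point of $G$ to a best proximity point of $F$, and this is the step I expect to carry the real content. The key auxiliary fact is that $F$ maps $M_0$ into $N_0$: if $w \in M_0$ and $z \in N_0$ realize $\|w-z\| = d(M,N) =: D$, then $Fw \in N$, $Fz \in M$, and $D \le \|Fw - Fz\| \le \|w - z\| = D$, so that $Fw \in N_0$ (witnessed by $Fz \in M$). Applying this to the limit $w \in M_0$ gives $Fw \in N_0$, whence $d(Fw, M) = D$. Since $w = P_M(Fw)$ is the nearest point of $M$ to $Fw$, we conclude $\|w - Fw\| = d(Fw, M) = D$, that is, $\|w - Fw\| = d(M,N)$, as required. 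The only places where care is needed are the twisted application of Proposition~\ref{l10} (matching the pair $(Fz,Fw)$ to the statement) and the closedness of $M_0$ used implicitly when passing the inclusion $w \in M_0$ through the limit, both of which are routine in the Hilbert space setting.
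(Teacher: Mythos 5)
Your proof follows the paper's intended route exactly: the paper's (unwritten) justification for this corollary is precisely the remark preceding it, namely to apply Theorem~\ref{l12} to the relatively nonexpansive self-map pair $P_MF$, $P_NF$ furnished by Proposition~\ref{l10}, under the hypothesis $d(w_n,M_0)\to 0$. You additionally spell out the step the paper leaves implicit --- that a fixed point of $P_MF$ lying in $M_0$ is a best proximity point of $F$, via $F(M_0)\subseteq N_0$ --- and that verification is correct.
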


\begin{corollary}
	Let $M$ and $N$ be nonempty, closed, bounded and convex subsets of a Hilbert space $X$. 
	Let $F$ be as in Theorem \ref{l1}. If $F(M)$ is mapped into a compact subset of $N$, then for any $w_0 \in M_0$ the sequence defined by $w_{n+1} = P^n \big( (1-\eta_n) w_n + \eta_n P_M\big(F((1-\delta_n)w_n+\delta_n P_MFw_n)\big) \big)$ converges to $w$ in $M_0$ such that $\parallel w-Fw \parallel = d(M,N).$
\end{corollary}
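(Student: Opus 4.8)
The plan is to recognise that this statement is nothing but the von Neumann ($P^{n}$-wrapped) version of Corollary \ref{l14}, and that the extra alternating projection $P^{n}$ is inert once the iterates are confined to $M_0$. Write $G=P_MF$, so that on $M$ one has $Gw=P_MFw$; then with $z_n=(1-\delta_n)w_n+\delta_nGw_n\in M$ the defining recursion reads $w_{n+1}=P^{n}\big((1-\eta_n)w_n+\eta_nGz_n\big)$, which is exactly the recursion of Corollary \ref{l14} with an outer $P^{n}=(P_MP_N)^{n}$ applied. By the discussion preceding Corollary \ref{l14}, $G$ is a relatively nonexpansive self-map ($G(M)\subseteq M$, $G(N)\subseteq N$, and $\|Gw-Gz\|\le\|w-z\|$ by Proposition \ref{l10}); and since $F(M)$ lies in a compact subset of $N$ and $P_M$ is nonexpansive, $G(M)=P_MF(M)$ lies in a compact set. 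So the whole task reduces to showing that $P^{n}$ may be dropped.

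To do that I would first record two geometric facts in the Hilbert setting. (i) $F(M_0)\subseteq N_0$: for $w\in M_0$ let $z_0=P_Nw\in N_0$, so $\|w-z_0\|=d(M,N)=:D$; since $Fw\in N$ and $Fz_0\in M$ we get $D\le\|Fw-Fz_0\|\le\|w-z_0\|=D$, whence $Fw\in N_0$. (ii) $P_M(N_0)\subseteq M_0$, which is immediate from the definitions of $M_0,N_0$ and uniqueness of nearest points. Combining (i)--(ii) gives $G(M_0)=P_MF(M_0)\subseteq M_0$. Moreover $M_0$ is convex and, crucially, $P=P_MP_N$ fixes $M_0$ pointwise: for $a\in M_0$ the point $P_Na\in N_0$ realises the distance $D$, and then $P_M(P_Na)=a$ by uniqueness, so $P^{n}a=a$ for every $n$.

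With these in hand I would argue by induction that every iterate stays in $M_0$. Assuming $w_n\in M_0$, convexity of $M_0$ together with $G(M_0)\subseteq M_0$ gives $z_n\in M_0$, then $Gz_n\in M_0$, then the convex combination $(1-\eta_n)w_n+\eta_nGz_n\in M_0$; applying $P^{n}$, which is the identity on $M_0$, yields $w_{n+1}\in M_0$. Hence the $P^{n}$ wrapper disappears and the recursion coincides with that of Corollary \ref{l14} (equivalently, the extra hypothesis $\|w_n-Gz_n\|\to0$ of Theorem \ref{l13} is supplied by the Theorem \ref{l12} argument, since $d(w_n,M_0)=0\to0$). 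Invoking Corollary \ref{l14} then gives $w_n\to w\in M_0$ with $P_MFw=w$, and the best proximity conclusion follows: putting $z_0=P_Nw$, the sandwich in (i) shows $\|Fw-Fz_0\|=D$, so $(Fz_0,Fw)\in M\times N$ is a closest pair, forcing $Fz_0=P_MFw=w$ and $Fw=P_N(Fz_0)=P_Nw=z_0$; therefore $\|w-Fw\|=\|w-z_0\|=d(M,N)$.

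The main obstacle is precisely the reduction in the middle paragraph: one must verify that the alternating projection $P$ leaves $M_0$ pointwise fixed and that $P_MF$ preserves $M_0$, for otherwise the outer $P^{n}$ would genuinely alter the dynamics and Theorem \ref{l13} could not be routed through Corollary \ref{l14}. Everything else is the relative-nonexpansiveness bookkeeping already carried out in Theorems \ref{l12}--\ref{l13}.
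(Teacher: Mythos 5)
Your proposal is correct and follows essentially the same route as the paper: the paper's own proof is the one-line "The result follows by Corollary \ref{l14}," which tacitly relies on exactly the reduction you make explicit, namely that for $w_0\in M_0$ the iterates remain in $M_0$ (via $P_MF(M_0)\subseteq M_0$ and convexity of $M_0$) and $P^n=(P_MP_N)^n$ acts as the identity there, so the recursion collapses to that of Corollary \ref{l14}. Your write-up simply supplies the justification the paper leaves implicit.
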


\begin{proof}
	The result follows by Corollary \ref{l14}.
\end{proof}

\begin{corollary}
	Let $M$ and $N$ be nonempty, closed, bounded and convex subsets of a Hilbert space $X$. 
	Let $F$ be as in Theorem \ref{l1}. Let $w_0\in M,$ and define $w_{n+1}=P^n \big( (1-\eta_n)w_n + \eta_n P_MFz_n \big) ,\, $ where $  z_n = (1-\delta_n)w_n+\delta_n P_MFw_n,\, \eta_n, \delta_n \in (\epsilon, 1-\epsilon),$ 
	where $\epsilon\in (0,1/2)$ and $n=0,1,2,...$. if $F(M)$ is mapped into a compact subset of $N$ and $\parallel w_n - P_MF z_n \parallel \rightarrow 0$, then $\{w_n\}$ converges to $w$ in $M_0$ such 
	that $\parallel w-Fw \parallel = d(M,N).$
\end{corollary}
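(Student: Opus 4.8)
The plan is to reduce the statement to Theorem \ref{l13} by passing to the ``projected'' map $G$ defined by $Gw = P_M F w$ for $w \in M$ and $Gw = P_N F w$ for $w \in N$, exactly as indicated in the paragraph preceding Corollary \ref{l14}. First I would verify that $G$ satisfies the hypotheses of Theorem \ref{l13}. Since $P_M$ maps into $M$ and $P_N$ maps into $N$, we have $G(M)\subseteq M$ and $G(N)\subseteq N$; and Proposition \ref{l10} gives $\|Gw-Gz\| = \|P_M F w - P_N F z\| \le \|w-z\|$ for $w\in M$, $z\in N$, so $G$ is relatively nonexpansive. Because every iterate lies in $M$, we have $Gw_n = P_M F w_n$ and $Gz_n = P_M F z_n$, so the recursion $w_{n+1}=P^n\big((1-\eta_n)w_n+\eta_n P_M F z_n\big)$, $z_n=(1-\delta_n)w_n+\delta_n P_M F w_n$, is literally the Ishikawa--von Neumann iteration of Theorem \ref{l13} written for $G$. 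The compactness hypothesis transfers as well: $G(M)=P_M(F(M))$ lies in the continuous (hence relatively compact) image of the compact set containing $F(M)$, and the auxiliary assumption $\|w_n-Gz_n\| = \|w_n-P_M F z_n\|\to 0$ is exactly what is assumed.

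Applying Theorem \ref{l13} to $G$ then yields that $\{w_n\}$ converges to a fixed point $v_0$ of $G$, that is, $P_M F v_0 = v_0$. Following the argument in the proof of that theorem, the limit is realized as $v_0 = u(v_0)=\lim_m P^m(v_0)$, and the von Neumann convergence (Theorem \ref{l6}) places this limit in $M_0$; hence $v_0\in M_0$, which supplies the ``in $M_0$'' part of the conclusion.

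It remains to upgrade ``$v_0$ is a fixed point of $P_M F$ lying in $M_0$'' to ``$\|v_0-Fv_0\| = d(M,N)$'', and this is the step I expect to be the crux. Since $v_0\in M_0$, the point $z_0:=P_N(v_0)$ lies in $N_0$ and satisfies $\|v_0-z_0\| = d(M,N)$. Relative nonexpansiveness gives $\|Fv_0-Fz_0\|\le \|v_0-z_0\| = d(M,N)$, while $Fv_0\in N$ and $Fz_0\in M$ force $\|Fv_0-Fz_0\|\ge d(M,N)$; hence equality holds. Thus $Fz_0\in M$ realizes $d(Fv_0,M)=d(M,N)$, so by uniqueness of the metric projection in the Hilbert space $P_M(Fv_0)=Fz_0$. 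But $P_M(Fv_0)=v_0$, whence $Fz_0 = v_0$, and therefore $\|v_0-Fv_0\| = \|Fz_0-Fv_0\| = d(M,N)$. This exhibits $v_0\in M_0$ as the desired best proximity point and completes the proof. The only genuine work beyond bookkeeping is this final identification, where the equality case of the projection together with strict convexity of the Hilbert norm does the job; everything else is a direct transfer of Theorem \ref{l13} through the map $G=P_M F$.
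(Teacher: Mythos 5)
Your proposal is correct and takes essentially the same route as the paper, whose entire proof is an appeal to Theorem \ref{l13} applied to the auxiliary relatively nonexpansive map $G = P_M F$ on $M$ and $P_N F$ on $N$ introduced in the paragraph before Corollary \ref{l14}. The paper leaves implicit both that the limit lies in $M_0$ (which, as you note, comes from the proof rather than the statement of Theorem \ref{l13}) and the final upgrade from ``fixed point of $P_M F$ in $M_0$'' to ``best proximity point of $F$''; your explicit verification of these via the equality case of the metric projection fills in detail rather than changing the argument.
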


\begin{proof}
	The result follows by Theorem \ref{l13}.
\end{proof}

% ------------------------------------------------------------------------

%\subsection*{Acknowledgment}

\label{lastpage}

\end{document}